\newtheorem{Le}{Lemma}[section]
\newtheorem{Def}{Definition}[section]
\newtheorem{St}[Le]{Proposition}
\newtheorem{Th}{Theorem}[section]
\newtheorem{Cor}[Le]{Corollary}
\newtheorem{Rem}[Le]{Remark}
\numberwithin{equation}{section}
\DeclareMathOperator{\supp}{supp}
\DeclareMathOperator{\diam}{diam}
\title{A measure with small support and $p$-summable Fourier transform.}
\author{Nikita Dobronravov\footnote{Supported by Theoretical Physics and Mathematics Advancement Foundation "BASIS" grant Junior Leader (Math) 21-7-2-12-2 and  by the Ministry of Science and Higher Education of the Russian Federation (agreement no. 075-15-2022-287).}}
\begin{document}
	\maketitle
	\begin{abstract}
		 We construct a probability measure $\mu$ supported on a set of zero $2d/p$\! -Hausdorff measure such that $\hat{\mu}\in L_{p}(\mathbb{R}^d)$.
	\end{abstract}
	
	\section{Introducion}
	The Uncertainty Principle (UP) in mathematical analysis is a family of facts that state: both function and its Fourier transform cannot be simultaneously small (see~\cite{HavJor}). The following theorem is yet another manifestation of the Uncertainty Principle. Denote the $\alpha$-Hausdorff measure by $\mathcal{H}_\alpha$.
	\begin{Th}\label{nonend} 
		Let $S\subset \mathbb{R}^d$ be a compact set such that $\mathcal{H}_{\alpha}(S)<\infty$. Let $\zeta$ be a tempered distribution such that $\supp(\zeta)\subset S$ and $\hat{\zeta}\in L_p(\mathbb{R}^d)$ for $p<\frac{2d}{\alpha}$. Then $\zeta=0$.
	\end{Th}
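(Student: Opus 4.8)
The plan is to combine two classical ingredients. First, I would observe that $L_p$-integrability of $\hat\zeta$ forces $\zeta$ into a negative Sobolev space $H^{-s}(\R^d)$; second, I would invoke the potential-theoretic fact that a compact set of Hausdorff dimension strictly below $d-2s$ is removable for $H^{-s}$. The point is that the hypothesis $p<2d/\alpha$ is exactly what makes the relevant ranges of $s$ overlap. For $p\le2$ there is nothing to do: Hausdorff--Young gives $\zeta=\mathcal F^{-1}\hat\zeta\in L_{p'}(\R^d)$, a genuine function, and since $\mathcal H_\alpha(S)<\infty$ with $\alpha<d$ forces $S$ to be Lebesgue-null, $\zeta=0$ a.e. So I would reduce at once to $p>2$.

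Next I would record that, $\zeta$ being compactly supported, $\hat\zeta$ is smooth (Paley--Wiener--Schwartz), in particular bounded on the unit ball. Then I would pick any $s$ with $\tfrac d2-\tfrac dp<s<\tfrac{d-\alpha}2$ --- possible precisely because $\tfrac d2-\tfrac dp<\tfrac{d-\alpha}2$ is equivalent to $p<\tfrac{2d}\alpha$ --- and check that $\zeta\in H^{-s}(\R^d)$, i.e.\ $\int(1+|\xi|^2)^{-s}|\hat\zeta|^2<\infty$. On $\{|\xi|\le1\}$ this is immediate from boundedness of $\hat\zeta$; on $\{|\xi|>1\}$, Hölder's inequality with exponents $\tfrac p2$ and $\tfrac p{p-2}$ reduces it to the convergence of $\int_{|\xi|>1}|\xi|^{-2sp/(p-2)}d\xi$, which holds iff $\tfrac{2sp}{p-2}>d$, i.e.\ iff $s>\tfrac d2-\tfrac dp$. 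So $\zeta\in H^{-s}(\R^d)=(H^s(\R^d))^*$.

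To finish, I would use that $\dim_{\mathrm H}S\le\alpha<d-2s$, hence $\mathcal H_{d-2s}(S)=0$, hence the Sobolev capacity $C_{s,2}(S)=0$; consequently for each $\eps>0$ there is $\psi_\eps\in C^\infty_c(\R^d)$ with $\psi_\eps\equiv1$ near $S$ and $\|\psi_\eps\|_{H^s}<\eps$. For a test function $\phi$, the function $(1-\psi_\eps)\phi$ vanishes near $\supp\zeta\subset S$, so $\langle\zeta,\phi\rangle=\langle\zeta,\psi_\eps\phi\rangle$, and then
\[
|\langle\zeta,\phi\rangle|\le\|\zeta\|_{H^{-s}}\|\psi_\eps\phi\|_{H^s}\lesssim_\phi\|\zeta\|_{H^{-s}}\|\psi_\eps\|_{H^s}<\eps\,\|\zeta\|_{H^{-s}},
\]
using the routine multiplication bound $\|\psi_\eps\phi\|_{H^s}\lesssim_\phi\|\psi_\eps\|_{H^s}$ (Young's inequality for $\widehat{\psi_\eps\phi}=\hat\psi_\eps*\hat\phi$ together with Peetre's inequality). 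Letting $\eps\to0$ yields $\langle\zeta,\phi\rangle=0$ for all test $\phi$, i.e.\ $\zeta=0$.

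The hard part --- and the only genuinely non-elementary step --- is the passage $\mathcal H_{d-2s}(S)=0\Rightarrow C_{s,2}(S)=0$. This is exactly where the sharp exponent lives: a crude covering argument, bounding $\|\psi_\eps\|_{H^s}$ by $\sum_i r_i^{(d-2s)/2}$ over a Hausdorff cover of $S$, only delivers the weaker range $p<d/\alpha$; recovering $p<2d/\alpha$ requires the $L^2$-type (strong) subadditivity of Sobolev capacity, i.e.\ standard nonlinear potential theory --- the comparison of Bessel capacities with Hausdorff measures, together with $C_{s,2}(B(x,r))\asymp r^{d-2s}$ for small $r$ --- rather than the mere triangle inequality for the $H^s$-norm. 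I would cite this from e.g.\ Adams--Hedberg; everything else (Hausdorff--Young, Paley--Wiener, Hölder, the multiplication estimate) is routine.
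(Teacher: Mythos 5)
Your proof is correct in outline, but it takes a genuinely different route from the paper in the one step that matters. The paper does not prove Theorem~\ref{nonend} from scratch: it performs exactly your H\"older computation to show that $\hat\zeta\in L_p$ places $\zeta$ in $W_2^{\frac{d}{p}-\frac{d}{2}-\eps}\subset W_2^{\frac{\alpha-d}{2}}$ (your $H^{-s}$ with $\frac d2-\frac dp<s<\frac{d-\alpha}{2}$ is the same statement), and then cites Kahane's Theorem~\ref{Kahan} to conclude. You instead replace Kahane's theorem by the potential-theoretic argument: $\mathcal H_{d-2s}(S)=0\Rightarrow C_{s,2}(S)=0\Rightarrow S$ is removable for $H^{-s}$, implemented via cutoffs $\psi_\eps$ of small $H^s$-norm. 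This is essentially the Adams--Polking route that the paper explicitly acknowledges (``Adams and Polking proved a result that implies Theorem~\ref{nonend}''), so in effect you are reproving the relevant case of Kahane's theorem rather than citing it; your remark that naive covering plus the triangle inequality only yields $p<d/\alpha$, and that the sharp exponent requires the genuine capacity--Hausdorff comparison, correctly identifies where the difficulty sits. Two citable but nontrivial facts carry your argument: the implication $\mathcal H_{d-2s}(S)<\infty\Rightarrow C_{s,2}(S)=0$, and the equivalence between capacity zero and the existence of smooth $\psi_\eps\equiv1$ near $S$ with $\|\psi_\eps\|_{H^s}$ small (for noninteger or large $s$ this is exactly the ``equivalence of two definitions of capacity'' of Adams--Polking, not a mere definition unwind). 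Also note the statement implicitly requires $\alpha<d$ --- for $\alpha=d$ it is false, as $\zeta=\chi_{[0,1]^d}$ with $\hat\zeta\in L_{3/2}$ shows --- and your $p\le2$ reduction and the positivity of $s$ both quietly use this, which is fine but worth flagging.
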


	The case of Theorem~\ref{nonend} $d=1$ and $\zeta$ being a measure was considered by Salem. He also proved that for $d=1$ Theorem~\ref{nonend} cannot be strengthened to any $p>\frac{2}{\alpha}$ (see~\cite{Salem}).  
	Beurling obtained a result  that implies Theorem~\ref{nonend} in the case $d=1$ (see~\cite{Beurling}).
	Edgar and Rosenblatt  proved Theorem~\ref{nonend} in the case $d-1\leqslant \alpha$ (see~\cite{EdgRos}). Kahane obtained the following result (see~\cite{Kah}). 
	
	\begin{Th}[Kahane \cite{Kah}]\label{Kahan}
		Let $S\subset \mathbb{R}^d$ be a compact set such that $\mathcal{H}_{\alpha}(S)<\infty$. Let $\zeta$ be a distribution such that $\supp(\zeta)\subset S$ and $\zeta\in  W_2^{\frac{\alpha-d}{2}}(\mathbb{R}^d)$. Then $\zeta=0$.
	\end{Th}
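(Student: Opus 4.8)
\noindent\textbf{Proof plan for Theorem~\ref{Kahan}.}
We may assume $0<\alpha<d$: for $\alpha\ge d$ the conclusion can fail (e.g.\ $\zeta=\mathbf{1}_S$ when $\alpha=d$), and for $\alpha=0$ the set $S$ is finite, so a nonzero distribution supported on $S$ has an exponential polynomial for Fourier transform and cannot satisfy $\int_{\mathbb{R}^d}(1+|\xi|^2)^{-d/2}|\widehat\zeta(\xi)|^2\,d\xi<\infty$. Write $\gamma:=\tfrac{d-\alpha}{2}\in(0,\tfrac d2)$; the hypothesis then reads $\zeta\in W_2^{-\gamma}(\mathbb{R}^d)$, the dual of $W_2^{\gamma}(\mathbb{R}^d)$, with $\supp\zeta\subset S$, and we must show $\zeta=0$.

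The plan has two parts. The first reduces the theorem to the geometric assertion that $S$ has vanishing Bessel capacity $C_{\gamma,2}(S)=0$. Granting this, $S$ is removable for $W_2^{\gamma}$, i.e.\ $C_0^\infty(\mathbb{R}^d\setminus S)$ is dense in $W_2^{\gamma}(\mathbb{R}^d)$ (a classical equivalence of potential theory; see Adams--Hedberg or Maz'ya). Since $\zeta$ is a bounded linear functional on $W_2^{\gamma}$ that annihilates every $\psi\in C_0^\infty(\mathbb{R}^d\setminus S)$ — such $\psi$ having support disjoint from $\supp\zeta$ — it annihilates this dense subspace, hence $\zeta=0$.

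The second part, which is the heart of the matter, proves $C_{\gamma,2}(S)=0$ by contradiction, exploiting the tension between finite Hausdorff measure and finite Riesz energy. If $C_{\gamma,2}(S)>0$, then by the dual description of capacity there is a nonzero positive measure $\mu$ carried by $S$ of finite Bessel energy $\iint G_{2\gamma}(x-y)\,d\mu(x)\,d\mu(y)<\infty$, $G_{2\gamma}$ being the Bessel kernel of order $2\gamma$ (so $\widehat{G_{2\gamma}}=(1+|\xi|^2)^{-\gamma}$ and this energy equals $\|\mu\|_{W_2^{-\gamma}}^2$). As $G_{2\gamma}(z)\asymp|z|^{2\gamma-d}=|z|^{-\alpha}$ for $|z|\le1$ and $\mu$ has bounded support, finiteness of the energy is the same as $\iint|x-y|^{-\alpha}\,d\mu(x)\,d\mu(y)<\infty$; by Fubini, $\int|x-y|^{-\alpha}\,d\mu(y)<\infty$ for $\mu$-a.e.\ $x$. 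For such an $x$ — which cannot be an atom of $\mu$ — the tail $\int_{|x-y|\le r}|x-y|^{-\alpha}\,d\mu(y)$ tends to $0$ as $r\to0$, while $|x-y|^{-\alpha}\ge r^{-\alpha}$ on $\{|x-y|\le r\}$ gives $\mu(B(x,r))\le r^{\alpha}\int_{|x-y|\le r}|x-y|^{-\alpha}\,d\mu(y)$; hence $\mu(B(x,r))=o(r^{\alpha})$ as $r\to0$, for $\mu$-a.e.\ $x$. But $\mathcal H_\alpha(S)<\infty$ prevents this on a set of positive $\mu$-mass: by the standard density comparison (if $\mu(B(x,r))\le\eta r^{\alpha}$ for all small $r$ and all $x$ in a set $A$, then $\mu(A)\le C\eta\,\mathcal H_\alpha(A)\le C\eta\,\mathcal H_\alpha(S)$; see Mattila or Falconer), the set $\{x:\mu(B(x,r))/r^{\alpha}\to0\}$ is $\mu$-null. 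Therefore $\mu=0$, a contradiction; so $C_{\gamma,2}(S)=0$.

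I expect the main obstacle to lie in this second part, and more precisely in the sharpness of the quantitative tension it uses: finite $\alpha$-energy is \emph{strictly} stronger than an $\alpha$-Frostman bound — it produces $\mu(B(x,r))=o(r^{\alpha})$, not merely $O(r^{\alpha})$ — while finite $\mathcal H_\alpha$-measure is \emph{exactly} strong enough to forbid $\mu$ from living where that $o$-decay holds. This balance is what makes the exponent $\tfrac{\alpha-d}{2}$ optimal, in line with Salem's examples; consequently the argument admits no slack and the $o$-versus-$O$ distinction is essential. The remaining pieces — the duality $W_2^{-\gamma}=(W_2^{\gamma})^{*}$, the capacity/removability equivalence, the dual formula for Bessel capacity, and the Hausdorff-measure density comparison — are standard, and I would simply cite them; alternatively the entire second part can be quoted as the classical fact that a compact set of finite $\mathcal H_\alpha$-measure has vanishing Bessel $\tfrac{d-\alpha}{2}$-capacity.
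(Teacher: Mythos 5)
The paper does not prove Theorem~\ref{Kahan}; it is quoted as background from Kahane~\cite{Kah}, so there is no internal proof to compare against. Your argument is the standard potential-theoretic proof of this fact and I believe it is correct, modulo the two classical results you cite. Your second part is exactly the textbook argument (essentially \cite[Theorem 8.7]{Mattila}): finite $\alpha$-energy forces $\mu(B(x,r))=o(r^{\alpha})$ at $\mu$-a.e.\ point, and the upper-density comparison $\mu(A)\leqslant 2^{\alpha}\lambda\,\mathcal{H}_{\alpha}(A)$, valid for every $\lambda>0$ on the set where the density vanishes, annihilates $\mu$; your observation that the $o(r^{\alpha})$-versus-$O(r^{\alpha})$ distinction is precisely the endpoint mechanism is apt. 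Your preliminary remark that the statement implicitly requires $\alpha<d$ is also correct (the theorem as printed is false for $\alpha\geqslant d$).

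The one place where your citation is carrying real weight is the first part: the equivalence between $C_{\gamma,2}(K)=0$ and the density of $C_0^\infty(\mathbb{R}^d\setminus K)$ in $W_2^{\gamma}$ (equivalently, $K$ being $(2,\gamma)$-polar: no nonzero element of $W_2^{-\gamma}$ is supported in $K$). For $0<\gamma\leqslant 1$ this follows from a truncation of capacitary test functions, but here $\gamma=\frac{d-\alpha}{2}$ exceeds $1$ whenever $\alpha<d-2$, and truncation is not bounded on $W_2^{\gamma}$ for $\gamma>1$; the equivalence is still a theorem (Littman, Grushin, Maz'ya--Khavin; it is also the content of \cite[Theorem A]{AdPo}, which the paper itself invokes for the non-endpoint Theorem~\ref{nonend}), but you should cite it in that precise form rather than as generic ``removability''. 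With that reference pinned down, the proposal is complete.
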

	Here $W^{\alpha}_p(\mathbb{R}^d)$ is the potential Sobolev space,  defined for $p\in(1,\infty)$ by the following formula: 
	
	\begin{equation}
		W^{\alpha}_p(\mathbb{R}^d)=\{f| \  (1-\Delta)^{\frac{\alpha}{2}}f\in L_p(\mathbb{R}^d)\}.
	\end{equation}
	We use the notation $(1-\Delta)^{\frac{\alpha}{2}}$ for 
	\begin{equation}
		((1-\Delta)^{t}f)^{\widehat{}}(\xi)=|\hat{f}(\xi)|^2(1+|2\pi \xi|^2)^{t}.
	\end{equation}
	Theorem~\ref{nonend} can be deduced from Theorem~\ref{Kahan} because the Fourier transform maps $L_p(\mathbb{R}^d)$ to $W_2^{\frac{d}{p}-\frac{d}{2}-\varepsilon}(\mathbb{R}^d)$ for $p>2$ and $\varepsilon>0$:
	\begin{multline}
		\|f\|^2_{W_2^{\frac{d}{p}-\frac{d}{2}-\varepsilon}(\mathbb{R}^d)}=
		\int_{\mathbb{R}^d}|\hat{f}(\xi)|^2(1+|2\pi \xi|^2)^{\frac{d}{p}-\frac{d}{2}-\varepsilon}d\xi\leqslant\\
		\left(\int_{\mathbb{R}^d}|\hat{f}(\xi)|^pd\xi\right)^{\frac{2}{p}}\left(\int_{\mathbb{R}^d}(1+|2\pi \xi|^2)^{-\frac{d}{2}-\frac{\varepsilon p}{p-2}}d\xi\right)^{\frac{p-2}{p}}\lesssim\|f\|_{L_p}^2.
	\end{multline}
	Here and in what follows $A\lesssim B$ means there exists $C$ such that $A\leqslant CB$ and $C$ is uniform in certain sense.

	Adams and Polking proved a result that implies Theorem~\ref{nonend} (see~\cite[Theorem A]{AdPo}). However, they did not formulate anything resembling Theorem~\ref{nonend}.
	
	The limit case ($\alpha=2d/p$) was still open in the generality of Theorem~\ref{nonend}: it was unknown whether there exists an non zero measure $\mu$ supported on a set $S$, $\mathcal{H}_\alpha(S)<\infty$, and such that $\hat{\mu}\in L_\frac{2d}{\alpha}(\mathbb{R}^d)$. If one makes additional regularity assumptions on $S$, the UP holds true.
	
	
	\begin{Th}[Rosenblatt \cite{Ros}]\label{Rosenblatt}
		Let $S\subset \mathbb{R}^d$ be a $(d-1)$ dimensional smooth surface. Let $\zeta$ be a distribution such that $\supp(\zeta)\subset S$ and $\hat{\zeta}\in  L_{\frac{2d}{d-1}}(\mathbb{R}^d)$. Then $\zeta=0$.
	\end{Th}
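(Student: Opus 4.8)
The plan is to argue by contradiction: assume $\zeta\neq 0$ with $\supp\zeta\subset S$ and $\hat\zeta\in L_q(\mathbb{R}^d)$, $q:=\frac{2d}{d-1}$, and reach a contradiction. First I would reduce to a compactly supported distribution living on a graph patch. Pick $x_0\in\supp\zeta$ and a cutoff $\chi\in C_c^\infty(\mathbb{R}^d)$ equal to $1$ near $x_0$ with tiny support; then $\chi\zeta\neq 0$ (it agrees with $\zeta$ near $x_0\in\supp\zeta$), $\supp(\chi\zeta)$ is compact, and $\widehat{\chi\zeta}=\hat\chi*\hat\zeta\in L_q$ because $\hat\chi\in L_1$. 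Shrinking the support, $S\cap\supp\chi$ is, after a rotation, a graph $\{x_d=\phi(x')\}$ with $\phi$ smooth, $\phi(0)=0$, $\nabla\phi(0)=0$; so I may assume $\zeta\neq 0$ is compactly supported on this graph, and then $\hat\zeta$ is a real-analytic function.

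Next I would pass to polar coordinates. Put $A(R):=\int_{\{|\xi|=R\}}|\hat\zeta|^2\,d\sigma_R$, the integral over the sphere of radius $R$ against its $(d-1)$-dimensional surface measure. Hölder on the unit sphere $S^{d-1}$ (legitimate since $q>2$) gives
\[
\int_{\mathbb{R}^d}|\hat\zeta|^q\,d\xi=\int_0^\infty\Bigl(\int_{S^{d-1}}|\hat\zeta(R\omega)|^q\,d\omega\Bigr)R^{d-1}\,dR\;\gtrsim\;\int_0^\infty\Bigl(\int_{S^{d-1}}|\hat\zeta(R\omega)|^2\,d\omega\Bigr)^{q/2}R^{d-1}\,dR ,
\]
and since $\int_{S^{d-1}}|\hat\zeta(R\omega)|^2\,d\omega=R^{1-d}A(R)$ the last integral equals $\int_0^\infty A(R)^{q/2}R^{(1-d)q/2+d-1}\,dR$; the power of $R$ here is \emph{exactly} $-1$ precisely because $q=\frac{2d}{d-1}$, so $\int_{\mathbb{R}^d}|\hat\zeta|^q\,d\xi\gtrsim\int_0^\infty A(R)^{d/(d-1)}\,\frac{dR}{R}$. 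Hence it suffices to prove that $A$ does not decay: there are $c>0$ and a set $E\subset(1,\infty)$ with $\int_E\frac{dR}{R}=\infty$ on which $A(R)\ge c$. This makes the right side infinite, contradicting $\hat\zeta\in L_q$, so $\zeta=0$. This is exactly where the exponent enters: $\frac{2d}{d-1}$ is the threshold at which $|\xi|^{-(d-1)/2}$-decay of $\hat\zeta$ over a full cone of directions stops being $q$-integrable.

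To prove the non-decay of $A$, I would use the smoothness of $S$ via the structure theorem for distributions supported on a submanifold: on the graph patch $\langle\zeta,f\rangle=\sum_{j=0}^N\langle g_j(x'),(\partial_{x_d}^jf)(x',\phi(x'))\rangle$ for compactly supported distributions $g_0,\dots,g_N$ on $\mathbb{R}^{d-1}$, not all zero, whence
\[
\hat\zeta(\xi)=\sum_{j=0}^N(-2\pi i\xi_d)^j H_j(\xi),\qquad H_j(\xi',\xi_d):=\bigl\langle g_j(x'),e^{-2\pi i(x'\cdot\xi'+\phi(x')\xi_d)}\bigr\rangle .
\]
I would then bound $A(R)$ from below by restricting the sphere integral to a spherical cap around the pole $\xi=(0,\dots,0,R)$, where the conormal of $S$ at $0$ sits. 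If $\phi$ is genuinely curved near $0$, stationary phase applied to the $H_j$ (phase $\phi(x')\xi_d$, $\xi_d\approx R$) yields $|\hat\zeta|\sim R^{-(d-1)/2}$ times a slowly varying oscillatory factor on a cap of $\sigma_R$-measure $\sim R^{d-1}$, so $A(R)\gtrsim 1$ for those $R$ avoiding the zeros of that factor --- a set of infinite logarithmic measure; if instead $\phi$ is flat along some direction through $0$, the matching $H_j$ is essentially independent of $\xi_d$ along that direction, so $\hat\zeta$ does not decay there at all and again $A(R)\gtrsim 1$.

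The hard part will be making this last step rigorous \emph{uniformly}: the densities $g_j$ are honest distributions, with no pointwise value at the base point; the curvature of $S$ may degenerate at some points; and one must control the oscillatory factor sharply enough to know that the superlevel set $\{R:A(R)\ge c\}$ is logarithmically thick rather than merely of infinite measure. I expect to handle this by choosing $x_0\in\supp\zeta$ carefully and running stationary/non-stationary phase with distributional amplitudes --- and there is no room for loss, since the whole argument is tight at $q=\frac{2d}{d-1}$.
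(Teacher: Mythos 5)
This is a quoted result (Rosenblatt \cite{Ros}); the paper states it for context and does not prove it, so there is no in-paper argument to compare against. Judged on its own terms, your proposal has a correct and genuinely informative reduction, but a real gap at the decisive step. The localization (multiplying by a cutoff preserves $\hat\zeta\in L_q$ since $\hat\chi\in L_1$) and the polar-coordinate/Jensen step are fine, and your computation that the exponent $q=\frac{2d}{d-1}$ turns $\|\hat\zeta\|_{L_q}^q$ into a lower bound by $\int_0^\infty A(R)^{q/2}\,\frac{dR}{R}$ is exactly right. But the remaining claim --- that $A(R)\geqslant c$ on a set of infinite logarithmic measure --- \emph{is} the theorem, and your stationary-phase sketch does not establish it. The amplitudes $g_j$ are arbitrary compactly supported distributions on $\mathbb{R}^{d-1}$, so the assertion ``$|\hat\zeta|\sim R^{-(d-1)/2}$ on a cap of measure $\sim R^{d-1}$'' has no meaning without a quantitative pointwise nondegeneracy of a density at the chosen base point, which you cannot arrange; the several terms $(-2\pi i\xi_d)^jH_j$ can interfere; the Hessian of $\phi$ can degenerate on all of $\supp\zeta$; and the dichotomy ``curved versus flat along a direction'' is not exhaustive for a general smooth $\phi$. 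Since, as you say, the argument is tight at $q=\frac{2d}{d-1}$, none of these can be absorbed into a loss.

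You should also know that the known endpoint proofs (Rosenblatt, Agranovskiy--Narayanan, Raani) avoid oscillatory integrals entirely and are much softer; the only geometric input is that the $\varepsilon$-neighbourhood $S_\varepsilon$ satisfies $|S_\varepsilon|\lesssim\varepsilon^{d-k}$ (here $k=d-1$). Sketch: with $\phi_\varepsilon$ a mollifier, $\zeta*\phi_\varepsilon$ is supported in $S_\varepsilon$, so $\|\zeta*\phi_\varepsilon\|_{L_1}^2\leqslant|S_\varepsilon|\,\|\zeta*\phi_\varepsilon\|_{L_2}^2$; by Plancherel and H\"older, $\|\zeta*\phi_\varepsilon\|_{L_2}^2\leqslant\|\hat\zeta\|^2_{L_q(|\xi|>N)}\|\hat\phi_\varepsilon\|^2_{L_{2q/(q-2)}}+\int_{|\xi|\leqslant N}|\hat\zeta|^2$, and $\|\hat\phi_\varepsilon\|^2_{L_{2q/(q-2)}}\sim\varepsilon^{-(d-k)}$ precisely at $q=\frac{2d}{k}$. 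Letting $\varepsilon\to0$ and then $N\to\infty$ (the tail $\|\hat\zeta\|_{L_q(|\xi|>N)}$ vanishes) gives $\|\zeta*\phi_\varepsilon\|_{L_1}\to0$, hence $\zeta=0$. This both proves the statement and explains why the correct hypothesis is a Minkowski/packing-type regularity condition (Theorem~\ref{Raani}) rather than a Hausdorff-measure one --- which is exactly the distinction the paper's main theorem is about. I would recommend abandoning the stationary-phase route in favour of this duality argument.
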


	\begin{Th}[Agranovskiy, Narayanan \cite{ArgNar}]\label{AN}
		Let $S\subset \mathbb{R}^d$ be a $C^1$ surface of dimension $k$. Let $\zeta$ be a distribution such that $\supp(\zeta)\subset S$ and $\hat{\zeta}\in  L_{\frac{2d}{k}}(\mathbb{R}^d)$. Then $\zeta=0$.
	\end{Th}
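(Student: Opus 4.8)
The plan is threefold: reduce by localization to the case where $S$ is the graph of a $C^1$ map over a small ball in $\mathbb{R}^k$; slice $\hat\zeta$ in the $k$ "horizontal'' frequency variables, so that it becomes a measurable family of Fourier transforms of distributions supported on one fixed compact set of finite $k$-dimensional Hausdorff measure in $\mathbb{R}^{d-k}$; and then apply Theorem~\ref{nonend} to that family quantitatively, so that the $k$ extra integrations make up the gap to the endpoint exponent $2d/k$.

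\emph{Localization.} Take a $C^1$ partition of unity $\{\psi_j\}$ on $S$ subordinate to charts in which $S$ is a graph, and extend each $\psi_j$ to a function in $C_c^\infty(\mathbb{R}^d)$. Since $\widehat{\psi_j\zeta}=\widehat{\psi_j}*\hat\zeta$ and $\widehat{\psi_j}\in L_1(\mathbb{R}^d)$, convolution by $\widehat{\psi_j}$ is bounded on $L_{2d/k}(\mathbb{R}^d)$, so $\widehat{\psi_j\zeta}\in L_{2d/k}(\mathbb{R}^d)$; hence it suffices to prove $\psi_j\zeta=0$ for every $j$. We may therefore assume $\supp\zeta\subset\{(x',\Phi(x')):x'\in\overline U\}$ with $U$ a ball in $\mathbb{R}^k$ and $\Phi\in C^1(\overline U;\mathbb{R}^{d-k})$; after shrinking $U$ and an affine change of variables, also $\Phi(0)=0$ with $\Lip(\Phi)=\sup_{\overline U}\|D\Phi\|$ as small as we wish.

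\emph{Slicing.} For $\xi'\in\mathbb{R}^k$ let $\eta_{\xi'}$ be the partial Fourier transform of $\zeta$ in the first $k$ variables frozen at $\xi'$: since $\zeta$ has compact support, $\langle\eta_{\xi'},\varphi\rangle:=\big\langle\zeta,\ (y',x'')\mapsto e^{-2\pi i y'\cdot\xi'}\varphi(x'')\big\rangle$ (for $\varphi\in C_c^\infty(\mathbb{R}^{d-k})$) defines a compactly supported distribution on $\mathbb{R}^{d-k}$ with $\supp\eta_{\xi'}\subset\Phi(\overline U)=:K$, and $\hat\zeta(\xi',\xi'')=\widehat{\eta_{\xi'}}(\xi'')$. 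Because $\Phi$ is Lipschitz, $\mathcal{H}_{k}(K)\le\Lip(\Phi)^{k}\,\mathcal{H}_{k}(\overline U)<\infty$ (and small, once $U$ is small). By Tonelli, $\hat\zeta\in L_{2d/k}(\mathbb{R}^d)$ is the same as $\xi'\mapsto\|\widehat{\eta_{\xi'}}\|_{L_{2d/k}(\mathbb{R}^{d-k})}^{2d/k}\in L_1(\mathbb{R}^k)$; in particular $\widehat{\eta_{\xi'}}\in L_{2d/k}(\mathbb{R}^{d-k})$ for almost every $\xi'$, with $\eta_{\xi'}$ carried by the single fixed set $K$ of finite $\mathcal{H}_{k}$-measure.

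\emph{The endpoint, and the main obstacle.} Applying Theorem~\ref{nonend} in $\mathbb{R}^{d-k}$ to an individual slice $\eta_{\xi'}$ supported on $K$ would only kill exponents below $2(d-k)/k$; since $2d/k-2(d-k)/k=2>0$, the slice-wise information is insufficient, the deficit being independent of $\xi'$. The extra input that must be exploited is the joint $L_{2d/k}$-integrability in all $d$ variables — equivalently, the $L_1(\mathbb{R}^k)$-bound on the slice norms above. The intended mechanism is to prove a quantitative refinement of Theorems~\ref{nonend} and~\ref{Kahan} — a bound $\|\eta\|\lesssim C(q)\,\|\widehat\eta\|_{L_q(\mathbb{R}^{d-k})}$ for distributions $\eta$ carried by a set of $\mathcal{H}_{k}$-measure at most a small $c$, valid for $q<2(d-k)/k$ and with $C(q)$ controlled in terms of $c$ and of the deficit $2(d-k)/k-q$ — and then to integrate it over $\xi'\in\mathbb{R}^k$ after a rescaling (legitimate since $\Phi(0)=0$ and $\|D\Phi\|$ is small, so $K$ clusters at the origin) that converts a genuine horizontal frequency $\xi'$ into proximity to the endpoint; the $k$ integrations should then produce exactly the codimension-$k$ decay needed to turn the threshold $2(d-k)/k$ in $\mathbb{R}^{d-k}$ into $2d/k$ in $\mathbb{R}^d$. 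Getting this balance to work is, I expect, the crux of the argument. The $C^1$ hypothesis enters precisely where lesser regularity fails: it forces the slice support $K$ to have finite (and, after shrinking, small) $\mathcal{H}_{k}$-measure, it allows the $C^1$ partition of unity, and the flatness $\Phi(0)=0$, $D\Phi(0)=0$ is what makes the rescaling available — while at the same time the local flatness of a $C^1$ graph is never quantitatively better than that of the flat model $\Phi\equiv0$, for which the conclusion reduces to the elementary fact that a nonzero distribution supported on an affine $k$-plane has Fourier transform in no $L_q$ with $q<\infty$.
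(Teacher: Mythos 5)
The paper does not prove Theorem~\ref{AN} at all: it is quoted as background and attributed to \cite{ArgNar}, so there is no internal argument to compare yours against; I can only assess the proposal on its own terms. The preparatory steps are sound: localizing via $\widehat{\psi_j\zeta}=\widehat{\psi_j}*\hat\zeta$ with $\widehat{\psi_j}\in L_1(\mathbb{R}^d)$ legitimately reduces to a single graph, and the partial Fourier transform $\eta_{\xi'}$ is a well-defined compactly supported distribution on $\mathbb{R}^{d-k}$ carried by $K=\Phi(\overline U)$ with $\widehat{\eta_{\xi'}}(\xi'')=\hat{\zeta}(\xi',\xi'')$ and $\mathcal{H}_k(K)<\infty$.

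However, the proposal is not a proof, and you say so yourself: the entire content of the theorem is concentrated in the step you call ``the intended mechanism,'' which is described but never carried out. Per slice, Theorem~\ref{nonend} in $\mathbb{R}^{d-k}$ requires an exponent below $2(d-k)/k$, while you only have $2d/k$, a deficit of exactly $2$ for every $\xi'$; so the slicewise application gives nothing, and everything hinges on a quantitative, scale-sensitive refinement of Theorems~\ref{nonend} and~\ref{Kahan} together with a rescaling that converts the $k$ horizontal integrations into the missing $2$ units of exponent. Neither the refinement nor the rescaling is formulated, let alone proved, and I do not see how they could be: the joint hypothesis only yields the $L_1(\mathbb{R}^k)$ bound on $\xi'\mapsto\|\widehat{\eta_{\xi'}}\|_{L_{2d/k}(\mathbb{R}^{d-k})}^{2d/k}$, i.e.\ average decay of the slice norms, which cannot improve the integrability exponent of any individual slice --- and all slices sit on the same fixed set $K$, which for a genuinely curved $C^1$ graph can have positive $\mathcal{H}_k$ measure, so individual slices can perfectly well be nonzero (indeed, the main theorem of this very paper shows that finiteness of $\mathcal{H}_k$ alone never rules out the endpoint exponent). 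The flat model $\Phi\equiv 0$ that you invoke as the ``worst case'' is actually the easiest one for your scheme, since there $K$ is a point and each $\widehat{\eta_{\xi'}}$ is a polynomial; it gives no evidence that the mechanism closes the gap in general. As written, your argument establishes the theorem only for affine pieces of $S$; the $C^1$ case, which is the statement to be proved, remains open.
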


	\begin{Th}[Raani \cite{Raani}]\label{Raani} 
		Let $S\subset \mathbb{R}^d$ be a compact set such that $\mathcal{P}_{\alpha}(S)<\infty$. Let $\zeta$ be a distribution such that $\supp(\zeta)\subset S$ and $\hat{\zeta}\in L_{\frac{2d}{\alpha}}(\mathbb{R}^d)$. Then $\zeta=0$.
	\end{Th}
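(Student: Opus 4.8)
Throughout I assume $0<\alpha<d$, which is the substantive range (for $\alpha\ge d$ a smooth bump supported in a cube satisfies all the hypotheses, and for $\alpha=0$ a point mass does). Write $p=2d/\alpha\in(2,\infty)$ and $S=\supp\zeta$, fix $g\in C_c^\infty(B(0,\tfrac12))$ with $g\ge0$, $g\not\equiv0$, put $g^R(x)=R^dg(Rx)$, and consider
\[
 E(R):=\int_{\mathbb R^d}|\hat\zeta(\xi)|^2\,|\hat g(\xi/R)|^2\,d\xi=\|\zeta*g^R\|_{L^2}^2 .
\]
The plan is to prove $E(R)=o(R^{d-\alpha})$ as $R\to\infty$ whenever $\hat\zeta\in L_p$, and $E(R)\gtrsim R^{d-\alpha}$ whenever $\zeta\ne0$ and $\mathcal P_\alpha(S)<\infty$; being contradictory, these force $\zeta=0$. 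I would first carry this out in the model case $\zeta=\mu$, a finite non-negative measure, which contains the whole idea, and only then address a general distribution.

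\emph{Upper bound.} For $\zeta=\mu$ we have $\hat\mu\in L_\infty$; on a dyadic annulus $A_k=\{2^k\le|\xi|<2^{k+1}\}$ Hölder gives $\int_{A_k}|\hat\mu|^2\le a_k|A_k|^{1-2/p}\asymp a_k2^{k(d-\alpha)}$ with $a_k=(\int_{A_k}|\hat\mu|^p)^{2/p}\to0$, and summation (the last terms dominate a geometric series) yields $\int_{|\xi|\le R}|\hat\mu|^2=o(R^{d-\alpha})$. The same annular bound together with $|\hat g(\xi/R)|^2\lesssim_N(1+|\xi|/R)^{-N}$ for large $N$ gives $\int_{|\xi|>R}|\hat\mu|^2|\hat g(\xi/R)|^2\,d\xi=o(R^{d-\alpha})$, so $E(R)\le\|\hat g\|_\infty^2\int_{|\xi|\le R}|\hat\mu|^2+\int_{|\xi|>R}(\cdots)=o(R^{d-\alpha})$.

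\emph{Lower bound, exploiting $\mathcal P_\alpha(S)<\infty$.} Expanding $\|\mu*g^R\|_{L^2}^2$ and integrating out the space variable,
\[
 E(R)=R^d\iint(g\star g)\big(R(x-y)\big)\,d\mu(x)\,d\mu(y),\qquad(g\star g)(v)=\int g(u)g(u-v)\,du\ge0,
\]
with $\supp(g\star g)\subset B(0,1)$ and $(g\star g)(0)=\|g\|_2^2>0$, so $g\star g\ge c_0>0$ on some $B(0,c_1)$ and $E(R)\ge c_0R^d\int\mu(B(x,c_1/R))\,d\mu(x)$. Now invoke $\mathcal P_\alpha(S)<\infty$: by definition of the packing measure there is a Borel partition $S=\bigsqcup_jS_j$ with $\sum_jP_\alpha(S_j)<\infty$, $P_\alpha$ the packing pre-measure; fix $j_0$ with $\mu(S_{j_0})>0$ and set $\nu=\mu|_{S_{j_0}}$. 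Finiteness of $P_\alpha(S_{j_0})$ yields $N_r(S_{j_0})\le C_d\,P_\alpha(S_{j_0})\,r^{-\alpha}$ for all small $r$ (an $r$-separated subset of $S_{j_0}$ produces a disjoint packing of radius-$\tfrac r2$ balls, whose $\alpha$-content is controlled by the pre-measure). Cutting $\mathbb R^d$ into side-$r$ cubes $Q$ and using $Q\subset B(x,2r\sqrt d)$ for $x\in Q$, Cauchy--Schwarz over the $\le N_r(S_{j_0})$ occupied cubes gives
\[
 \int\nu\big(B(x,2r\sqrt d)\big)\,d\nu(x)\ge\sum_Q\nu(Q)^2\ge\frac{\nu(\mathbb R^d)^2}{\#\{Q:\nu(Q)>0\}}\gtrsim\frac{r^\alpha\,\mu(S_{j_0})^2}{P_\alpha(S_{j_0})}.
\]
Since $\mu\ge\nu$, taking $r\asymp1/R$ gives $E(R)\ge c_0R^d\int\nu(B(x,c_1/R))\,d\nu(x)\gtrsim R^{d-\alpha}$ for large $R$; with the upper bound this forces $\mu=0$.

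\emph{The main obstacle.} The hard part is removing the assumption that $\zeta$ is a measure, since the obvious reductions destroy the hypothesis: mollifying $\zeta$ enlarges $\supp\zeta$ to a set of positive Lebesgue measure, hence of infinite $\alpha$-packing measure; and the autocorrelation $\zeta*\widetilde\zeta$, though compactly supported with $\widehat{\zeta*\widetilde\zeta}=|\hat\zeta|^2\in L_{d/\alpha}$, need not be a measure (for $\zeta=\delta'$ it is $-\delta''$) and in any case lives on $S-S$ rather than on $S$. A genuinely distributional argument is thus required; the natural route is to observe that $\hat\zeta\in L_{2d/\alpha}$ yields only the $\varepsilon$-loss version of the Sobolev membership in Kahane's Theorem~\ref{Kahan} (a Besov-type refinement rather than $W_2^{(\alpha-d)/2}$ itself), so one must run a sharp capacity estimate in that endpoint space against $\mathcal P_\alpha(S)<\infty$. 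A secondary but genuine point, already present in the measure case: finiteness of the \emph{packing measure} (as opposed to the pre-measure) does not bound the global box-counting function of $S$, so the Cauchy--Schwarz step cannot be applied to $S$ as a whole — it is precisely the decomposition furnished by the packing measure, with box-count constants uniform in $j$, that rescues it.
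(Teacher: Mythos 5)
This is a theorem the paper quotes from Raani's article without proof, so there is no in-paper argument to compare against; I am judging your proposal on its own. Your treatment of the model case is essentially sound: for a finite non-negative measure $\mu$, the dyadic-annulus H\"older bound giving $\int_{|\xi|\le R}|\hat\mu|^2=o(R^{d-\alpha})$, the identity $E(R)=R^d\iint(g\star g)(R(x-y))\,d\mu(x)\,d\mu(y)$, and the Cauchy--Schwarz-over-cubes lower bound $\int\nu(B(x,cr))\,d\nu(x)\gtrsim r^{\alpha}$ on a piece $S_{j_0}$ of finite packing \emph{pre}-measure are all correct (modulo minor bookkeeping: the covering-number bound $N_r\lesssim(P_\alpha(S_{j_0})+1)r^{-\alpha}$ only holds for $r$ below a threshold depending on $S_{j_0}$, and the sets $S_j$ in the definition of $\mathcal P_\alpha$ are arbitrary, so one must pass to Borel hulls or argue with outer measure to define $\nu=\mu|_{S_{j_0}}$). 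Your observation that the decomposition furnished by the packing measure, rather than a global box-count of $S$, is what makes the lower bound work is also the right one.

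The genuine gap is the one you name yourself: the theorem is about distributions, your argument uses the positivity of $\mu$ twice (to drop the part of $g\star g$ below level $c_0$, and to identify $\sum_Q\nu(Q)^2$ with a lower bound for the correlation integral), and you only gesture at a ``sharp capacity estimate'' for the general case. As written this is a proof of a strictly weaker statement. The fix, however, is more elementary than the route you sketch: replace the positivity-based lower bound by the support--mass inequality $\|h\|_{L_2}^2\ge\|h\|_{L_1}^2/|\supp h|$. Concretely, by Baire category applied to $S=\supp\zeta=\bigcup_j(\overline{S_j}\cap S)$ there is an open $U$ with $\emptyset\ne U\cap S\subset\overline{S_{j_0}}$ for some $j_0$; choose $\phi\in C_c^\infty(U)$ with $\eta:=\phi\zeta\ne0$. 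Then $\hat\eta=\hat\phi*\hat\zeta\in L_p$ by Young, $\supp\eta$ has covering numbers $N_r\lesssim r^{-\alpha}$, so $h_R:=\eta*g^R$ is supported in a set of Lebesgue measure $\lesssim R^{\alpha-d}$, while $\|h_R\|_{L_1}\ge|\hat\eta(\xi_0)\hat g(\xi_0/R)|\ge c>0$ for large $R$ at any $\xi_0$ where the (continuous) function $\hat\eta$ does not vanish. Hence $\|h_R\|_{L_2}^2\gtrsim R^{d-\alpha}$, contradicting your upper bound $\|h_R\|_{L_2}^2=\int|\hat\eta|^2|\hat g(\cdot/R)|^2=o(R^{d-\alpha})$, which uses nothing but $\hat\eta\in L_p$ and so applies verbatim to distributions. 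This closes the gap and in fact renders the autocorrelation and positivity discussion unnecessary; your worry about passing through Kahane's Sobolev endpoint is a red herring.
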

	Here $\mathcal{P}_{\alpha}$ is the $\alpha$-packing measure (see~\cite[Chapter~5]{Mattila} for more information about packing measures). Theorem~\ref{Raani} is a generalization of Theorems~\ref{Rosenblatt} and~\ref{AN}.	
	Observe $\mathcal{P}_{\alpha}(\cdot)\geqslant\mathcal{H}_\alpha(\cdot)$. This inequality says that packing measure "sees" smaller sets than the Hausdorff measure. 
	
		The main result of the paper shows this UP does not hold at the endpoint without additional structural assumptions.
	\begin{Th}\label{res}
		Let $2<p<\infty$. There exists a compact set $S\subset \mathbb{R}^d$ and a probability measure $\mu$ such that $\supp(\mu)\subset S$, $\hat{\mu}\in L_{p}$, and $\mathcal{H}_{\frac{2d}{p}}(S)=0$.
	\end{Th}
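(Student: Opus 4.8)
The plan is to produce $S$ and $\mu$ by a randomized, deliberately \emph{inhomogeneous} Cantor‑type iteration, and then to take any realization in the (full‑probability) good event. Write $\alpha=\frac{2d}{p}$; since $p>2$ we have $0<\alpha<d$, and this strict inequality is what makes the argument possible. Two requirements pull against each other. By Theorem~\ref{nonend} one is forced to have $\dim_H S=\alpha$, so $\mathcal H_\alpha(S)=0$ can only come from $S$ carrying, at suitable (very fine, branch‑dependent) scales, pieces that are \emph{denser than dimension $\alpha$} — this is also the reason $S$ must have infinite $\alpha$‑packing measure, cf.\ Theorem~\ref{Raani}. On the other hand $\hat\mu\in L_p$ forces $\mu$ to be spread out in an $\ell^2$‑averaged sense. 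The whole construction is the reconciliation of these, and it is precisely for $\alpha<d$ that there is ``room'' for the gadget — slimming moves — that produces the local density blow‑ups.

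I would set $S=\bigcap_n \mathcal S_n$, where $\mathcal S_n$ is a disjoint union of cubes forming the $n$‑th level of a rooted tree $\mathcal{T}$, and $S$ is its set of infinite branches. Passing from level $n$ to $n+1$, each cube $Q$ performs one of two moves, prescribed by a schedule fixed in advance. A \emph{branching move} subdivides $Q$ into a grid of $M_n^{\,d}$ congruent subcubes and retains a uniformly random $k_n$‑element subset, independently over cubes and levels (this multiplies the side by $M_n^{-1}$ and the descendant count by $k_n$). A \emph{slimming move} replaces $Q$ by a single concentric subcube of side $\lambda_n\ell(Q)$ with $\lambda_n$ small (this changes neither the branch count nor the mass below $Q$). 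Let $\mu$ be the natural tree measure, $\mu(Q')=\mu(Q)/(\#\text{children of }Q)$; it is a Borel probability measure on $S$, and is \emph{not} uniform because the tree is inhomogeneous. The schedule runs in long blocks: inside block $i$ every cube performs one slimming move of ratio $\lambda_i$, but these are \emph{staggered} across the levels of the block so that at any single level only a fraction $\approx 1/L_i\to 0$ of cubes is slimming; a slimming move is immediately followed by one large branching move ($k_n\approx\Lambda_i M_n^{\alpha}$, possible since $\alpha<d$) that restores the density $\rho(Q):=\mu(Q)/\ell(Q)^{\alpha}$ from its momentary value $\approx\Lambda_i(\log)^{-E}$ back to its ``base'' value $(\log)^{-E}$. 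Away from spikes, cubes stay thin, $\rho(Q)\approx(\log)^{-E}$. Thus every branch, in every block $i$, passes through a cube whose density was pushed up to $\approx\Lambda_i$, where $\Lambda_i\to\infty$ may be chosen (subject only to $\Lambda_i\ll(\text{scale at start of block }i)^{\alpha}$). For the Hausdorff bound, fix $\Lambda$, and along each branch pick the first cube $Q$ with $\rho(Q)\ge\Lambda$; these form an antichain $\sigma_\Lambda$ covering $S$, with diameters $\to0$, and
\[
\sum_{Q\in\sigma_\Lambda}\ell(Q)^{\alpha}=\sum_{Q\in\sigma_\Lambda}\frac{\mu(Q)}{\rho(Q)}\le\frac1\Lambda\sum_{Q\in\sigma_\Lambda}\mu(Q)=\frac1\Lambda ,
\]
so $\mathcal H_\alpha(S)=0$.

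For the Fourier bound, decompose $\int_{\R^d}|\hat\mu|^{p}=\sum_k\int_{|\xi|\sim 2^k}|\hat\mu|^{p}$. Fix $R=2^k$ and let $\mathcal C_R$ be the antichain of cubes first attaining side $\le 1/R$; for $|\xi|\sim R$ one has $\widehat{\mu|_Q}(\xi)=\mu(Q)e^{-2\pi i\xi\cdot c_Q}+(\text{lower order})$. Using the independence of the random grid choices (with the $M_n$ taken large enough), a second–moment estimate should give $\mathbb{E}\,|\hat\mu(\xi)|^{2}\lesssim\sum_{Q\in\mathcal C_R}\mu(Q)^{2}$, and a net in the shell together with a union bound then upgrades this to $\sup_{|\xi|\sim R}|\hat\mu(\xi)|\lesssim(\log R)^{C}\bigl(\sum_{Q\in\mathcal C_R}\mu(Q)^{2}\bigr)^{1/2}$ almost surely. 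The schedule is built so that $\sum_{Q\in\mathcal C_R}\mu(Q)^{2}\lesssim R^{-\alpha}(\log R)^{-E}$ for every $R$: the thin cubes number $\gtrsim R^{\alpha}(\log R)^{E}$ with mass $\lesssim R^{-\alpha}(\log R)^{-E}$ each, while the $\lesssim R^{\alpha}(\log R)^{E}/L_i$ cubes currently in a spike have comparable mass and hence contribute a strictly smaller amount. Since $\frac{\alpha p}{2}=d$, the $k$‑th shell then contributes $\lesssim R^{d}\bigl(R^{-\alpha}(\log R)^{-E}\bigr)^{p/2}(\log R)^{Cp}\approx(\log R)^{(C-E/2)p}\approx k^{-(E/2-C)p}$, which is summable once $E>2C+\tfrac2p$, i.e.\ for $E$ a suitably large absolute power. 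Hence $\hat\mu\in L_p$ almost surely.

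The hard part is to make the two halves above simultaneously valid. The slimming spikes must be (a) present on \emph{every} branch with heights $\Lambda_i\to\infty$ — this alone gives $\mathcal H_\alpha(S)=0$ — while being (b) so sparse at each individual frequency scale that they never dominate $\sum_{Q\in\mathcal C_R}\mu(Q)^{2}$, which is what gives the shell bound; reconciling (a) and (b) forces the block lengths $L_i$, the spike heights $\Lambda_i$, the slimming ratios $\lambda_i$, and the grid parameters $(M_n,k_n)$ into a tight hierarchy of growth rates (and it is exactly at $p=2$, i.e.\ $\alpha=d$, that no such hierarchy exists, since then $k_n\approx\Lambda_i M_n^{\alpha}$ would exceed $M_n^{d}$). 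The other substantial task is the Fourier side: proving the second–moment bound $\mathbb{E}\,|\hat\mu(\xi)|^{2}\lesssim\sum_{Q\in\mathcal C_R}\mu(Q)^{2}$ with an acceptable power of $\log R$ \emph{in the presence of the correlations} coming from the tree structure and from cubes that overshot the scale $1/R$ during a slimming move, and then carrying out the net/union–bound step that turns it into an almost‑sure pointwise estimate. These two points — the scheduling hierarchy and the correlated second moment — carry essentially all the work.
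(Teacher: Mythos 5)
Your proposal is a plan rather than a proof, and the two steps you yourself identify as carrying ``essentially all the work'' are genuine gaps, not routine verifications. Most concretely: (a) you never estimate the mean $\mathbb{E}\hat\mu(\xi)$. With your branching move (a uniformly random $k_n$-element subset of an $M_n^d$ grid) the expected level-$n$ factor is the Fourier transform of the uniform measure on the full grid, which has large peaks on an arithmetic lattice of frequencies; so a second-moment bound about the mean does not control $|\hat\mu(\xi)|$ on those resonant shells, and you would have to either randomize translations continuously or show the peaks are harmless. (b) The step from $\mathbb{E}|\hat\mu(\xi)|^2\lesssim\sum_{Q}\mu(Q)^2$ to an almost-sure bound on $\sup_{|\xi|\sim R}|\hat\mu(\xi)|$ with only $(\log R)^{C}$ loss cannot be done by a union bound over $\sim R^d$ net points using second moments alone; you need high-moment or exponential concentration for sums that are \emph{not} independent across levels of the tree, and this (together with the cubes that overshoot scale $1/R$ during a slimming move) is precisely where the difficulty lives. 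What you are implicitly aiming for is an almost-sure pointwise decay $|\hat\mu(\xi)|\lesssim|\xi|^{-\alpha/2}(\log|\xi|)^{-E/2+C}$, i.e.\ an endpoint Salem-type estimate, which is strictly stronger than the theorem requires.

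The paper takes a much softer route that avoids both issues. The tree is refined one cube at a time: a single cube $Q_k$ is replaced by $M_k$ randomly and \emph{continuously} translated subcubes, so $\mathbb{E}\hat\mu_{M,r}$ is an explicit product of Fej\'er-type factors lying in $L_p$ for every $p>1$ (Lemma~\ref{Ep}), and the fluctuation is bounded in $L_p$ \emph{in expectation} by Marcinkiewicz--Zygmund (Lemma~\ref{mz}), giving $\mathbb{E}\int|\hat\mu_{M,r}-\mathbb{E}\hat\mu_{M,r}|^p\lesssim M^{-p/2}r^{-d}$ with no pointwise or sup bounds anywhere. The remaining obstacle --- summing infinitely many increments in $L_p$ without the lossy triangle inequality --- is handled by Lemma~\ref{p+p}: if $\|g_j\|_{L_{p_1}}\to0$ for some $p_1>p$, then $\|f+g_j\|_{L_p}^p\leqslant\|f\|_{L_p}^p+\varlimsup\|g_j\|_{L_p}^p$, so the $p$-th powers of the increments essentially add and the total budget is $\sum_n 2^{-n(p/2-1)}(n+1)^d<\infty$. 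Finally, your slimming-spike schedule is more machinery than the statement needs: the paper gets $\mathcal{H}_{2d/p}(S)=0$ simply by inserting the factor $\frac{1}{n(Q_k)+1}$ into the side lengths~\eqref{soot}, which makes the density $\mu(Q)/l(Q)^{2d/p}$ grow polynomially in the layer \emph{uniformly}, so the layer coverings have content $n^{-2d/p}\to0$; your antichain/density argument is correct but the staggered spikes (whose purpose is to distinguish Lorentz spaces) are not required here, and they are exactly what makes your correlated second-moment estimate hard.
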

	In particular, one cannot generalise Theorem~\ref{Raani} to Hausdorff measures.	The note is devoted to the proof of Theorem~\ref{res}, which is an explicit construction of a specific random Cantor-type set.
	Other constructions of random Cantor sets were used by Salem (see~\cite{Salem}) and Bluhm (see~\cite{Bluhm}). Our construction is more complicated and the Cantor set develops differently along different branches of the corresponding tree. This allows to distinguish the Lorentz spaces. Though these spaces play the pivotal role, we prefer to omit them in our notation. 
	Considerations from which we construct $\mu$ and $S$ in Theorem~\ref{res} are also related with Netrusov--Hausdorff capacities. This paper does not use them as well to make things simpler. An article with the exact form of the Uncertainty Principle for Lorentz spases and  Netrusov--Hausdorff capacities will appear elsewhere.
	
 	In Section~\ref{Constr}, we give a construction of $S$ and $\mu$. In Section~\ref{ESC}, we prove that our construction satisfies the conditions of Theorem~\ref{res}. In Section~\ref{AL}, we prove some auxiliary lemmas.

	{\bf Acknowledgment.}
	I am grateful to my scientific adviser D. M. Stolyarov for statement of the problem and attention to my work. I also wish to thank  M. K. Dospolova and A. S. Tselishchev for reading my work and advice on article formatting. 
	
	\section{Construction}\label{Constr}
	
	\subsection{General construction}
	Let $\mathcal{M}={M_0,M_1,...}$ be an infinite sequence of natural numbers to be specified later.
	We start with the construction of an infinite tree $\mathcal{T}$. We will inductively construct its subtrees $\mathcal{T}_k$ such that $\mathcal{T}_k\subset\mathcal{T}_{k+1}$ and $\mathcal{T}=\cup\mathcal{T}_k$.
	
	Let $V(G)$ be the set of vertices of the graph $G$ and let $E(G)$ be the set of edges. Set $V(\mathcal{T}_0)=\{Q_0,Q_1,\dots,Q_{M_0}\}$ and $E(\mathcal{T}_0)=\{(Q_0,Q_1),(Q_0,Q_2),\dots,(Q_0,Q_{M_0})\}$. 
	
	Assume we have concrusted $\mathcal{T}_{k-1}$. To build $\mathcal{T}_k$ we add to $\mathcal{T}_{k-1}$  $M_k$ new vertices connect them with $Q_k$:
	\begin{equation}
		V(\mathcal{T}_k)=V(\mathcal{T}_{k-1})\cup\{Q_{M_0+\dots+M_{k-1}+1},Q_{M_0+\dots+M_{k-1}+2},\dots ,Q_{M_0+\dots+M_{k-1}+M_k}\},
	\end{equation}
	\begin{equation}
		E(\mathcal{T}_k)=E(\mathcal{T}_{k-1})\cup\{(Q_k,Q_{M_0+\dots+M_{k-1}+1}),(Q_k,Q_{M_0+\dots+M_{k-1}+2}),\dots ,(Q_k,Q_{M_0+\dots+M_{k-1}+M_k})\}.
	\end{equation}
	We will say that $Q_k$ is a paren of those new vertices and that they are kids of $Q_k$.
	
	\begin{figure}[h]
		\center{\includegraphics[scale=0.5]{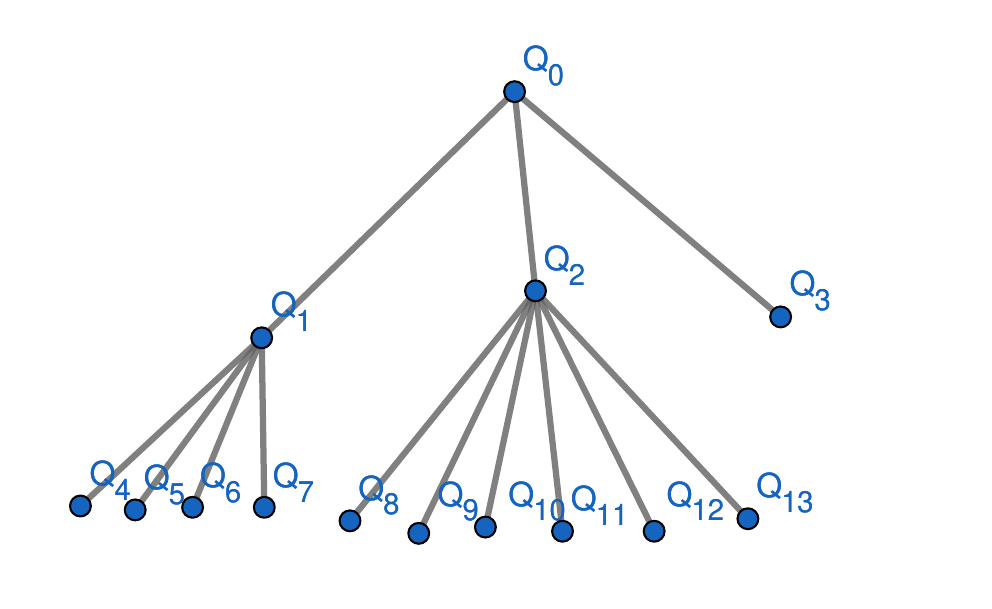}}
		\caption{Tree $\mathcal{T}_2$ for $M_0=3$, $M_1=4$, $M_2=6$.}\label{pic2}
	\end{figure}
	 We define the weight of the vertex $Q_i$ by the formula 
	\begin{equation}
		b(Q_i)=\!\!\!\!\underset{\text{ancestor of }Q_i}{\prod\limits_{Q_j\text{ is an}}}\!\!\!\! M_j^{-1}.
	\end{equation}

	We say that $Q_k$ belongs to the $n^{\text{th}}$ layer if it has exactly $n$ ancestors. We have bound $b(Q_k)\leqslant 2^{-n}$, if $Q_k$ belongs to the $n^{\text{th}}$ layer. The notation $n(Q_k)$ means the number of the layer of $Q_k$. 
	Clearly,
	\begin{equation}\label{probsum}
		\sum\limits_{n(Q_k)=n}b(Q_k)=1.
	\end{equation}
	
	Now let $Q_k$ (the vertices) be cubes in $\mathbb{R}^d$ and let $l(Q)$ the side length of cube $Q$.
	\begin{Def}
			The cube sequence $\{Q_0,Q_1,\cdots\}$  corresponds to the tree $\mathcal{T}$ if it satisfies requirements:
		\begin{enumerate}[1)]
			\item $Q_0=[0,1]^d$;
			
			\item if $Q_i$ is a parent of $Q_j$, then $Q_j\subset Q_i$;
			
			\item if $Q_j$ is a son of $Q_k$, then
			\begin{equation}\label{soot}
				l(Q_j)=r_k=\frac{M_k^{-\frac{p}{2d}}b(Q_k)^{\frac{p}{2d}}}{n(Q_k)+1}.
			\end{equation}
		\end{enumerate} 
	\end{Def}

	\begin{Def}
		Assume $\{Q_0,Q_1,\cdots\}$  corresponds to $\mathcal{T}$. The set $C_n$ is defined by the formula
		\begin{equation}
			C_n=\underset{n(Q_i)=n}{\bigcup}Q_i.
		\end{equation}
		We call the set $C=\cap_{n=1}^\infty C_n$ the Cantor-type set  corresponding to $\mathcal{T}$.
	\end{Def}
	Note that different songs of a cube may intersect.
	\begin{St}\label{H=0}
		Let $C$ be a Cantor-type set corresponding to $\mathcal{T}$. Then $\mathcal{H}_{\frac{2d}{p}}(C)=0$.
	\end{St}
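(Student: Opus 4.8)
The plan is to bound the $\frac{2d}{p}$-Hausdorff measure of $C$ directly from its natural cover by the cubes at level $n$, and show this bound tends to $0$ as $n\to\infty$. Recall that $C\subset C_n=\bigcup_{n(Q_i)=n}Q_i$ for every $n$, so
\begin{equation}
	\mathcal{H}_{\frac{2d}{p}}^{\delta_n}(C)\leqslant\sum_{n(Q_i)=n}l(Q_i)^{\frac{2d}{p}},
\end{equation}
where $\delta_n=\max_{n(Q_i)=n}l(Q_i)$; so it suffices to prove both that $\delta_n\to 0$ and that the right-hand sum is bounded (ideally, by something going to $0$ in $n$).

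First I would rewrite the sum at level $n$. A cube $Q_j$ with $n(Q_j)=n$ is a son of some $Q_k$ with $n(Q_k)=n-1$, and by~\eqref{soot} its side length is $l(Q_j)=r_k=\frac{M_k^{-p/(2d)}b(Q_k)^{p/(2d)}}{n}$. For a fixed parent $Q_k$ at layer $n-1$ there are exactly $M_k$ such sons, and one has the key relation $b(Q_j)=M_k^{-1}b(Q_k)$, hence $M_k^{-1}b(Q_k)^{p/(2d)}\cdot M_k^{1-p/(2d)}$-type bookkeeping is needed. Concretely,
\begin{equation}
	\sum_{n(Q_j)=n}l(Q_j)^{\frac{2d}{p}}
	=\sum_{n(Q_k)=n-1}M_k\cdot\frac{M_k^{-1}b(Q_k)}{n^{2d/p}}
	=\frac{1}{n^{2d/p}}\sum_{n(Q_k)=n-1}b(Q_k)
	=\frac{1}{n^{2d/p}},
\end{equation}
where the last equality is~\eqref{probsum}. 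Since $p>2$, the exponent $2d/p>0$ (and actually $>0$ suffices here, though one could note $2d/p<d$), so $n^{-2d/p}\to 0$ as $n\to\infty$.

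It remains to check $\delta_n\to 0$, i.e.\ that the maximal side length at level $n$ shrinks. From~\eqref{soot}, $l(Q_j)=\frac{M_k^{-p/(2d)}b(Q_k)^{p/(2d)}}{n}\leqslant\frac{b(Q_k)^{p/(2d)}}{n}$ since $M_k\geqslant 1$, and using the layer bound $b(Q_k)\leqslant 2^{-(n-1)}$ we get $l(Q_j)\leqslant\frac{2^{-(n-1)p/(2d)}}{n}\to 0$. (Even without the $2^{-n}$ bound, $l(Q_j)\leqslant 1/n\to 0$ already suffices, since $b(Q_k)\leqslant 1$.) Therefore for every $\delta>0$ there is $n$ with $\delta_n<\delta$ and $\sum_{n(Q_i)=n}l(Q_i)^{2d/p}=n^{-2d/p}$, so $\mathcal{H}_{\frac{2d}{p}}^{\delta}(C)\leqslant n^{-2d/p}$; letting $\delta\to 0$ (hence $n\to\infty$) gives $\mathcal{H}_{\frac{2d}{p}}(C)=0$.

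I do not anticipate a serious obstacle: the whole point of the exponent in~\eqref{soot} is precisely to make the level-$n$ cover sum collapse to $\sum b(Q_k)=1$ times a decaying factor $n^{-2d/p}$, so the only things to be careful about are (i) correctly using $b(\text{son})=M_k^{-1}b(\text{parent})$ together with the count $M_k$ of sons, and (ii) noting that sons of different cubes may overlap (as the text warns) — but this is harmless since we only need an \emph{upper} bound on Hausdorff measure, for which any cover, redundant or not, is legitimate.
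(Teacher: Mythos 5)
Your proof is correct and follows essentially the same route as the paper: cover $C$ by the layer-$n$ cubes and use \eqref{soot} together with \eqref{probsum} to collapse the cover sum to $n^{-2d/p}$. The only difference is that you explicitly verify the mesh size $\delta_n\to 0$ (which the paper leaves implicit), a welcome but minor addition.
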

	\begin{proof}
		The cubes of $n^\text{th}$ layer provide a covering of $C$. We use those coverings to estimate the Hausdorff measure of $C$:
		\begin{equation}
			\sum\limits_{n(Q_k)=n}\diam(Q_k)^{\frac{2d}{p}}\lesssim
			\sum\limits_{n(Q_k)=n}l(Q_k)^{\frac{2d}{p}}=
			\sum\limits_{n(Q_k)=n-1}M_kr_k^{\frac{2d}{p}}\overset{\text{\eqref{soot}}}{=}
			\sum\limits_{n(Q_k)=n-1}\frac{b(Q_k)}{n^{\frac{2d}{p}}}\ \overset{\eqref{probsum}}{=}\ 
			\frac{1}{n^{\frac{2d}{p}}}.
		\end{equation}
	\end{proof}
	There is a family of Cantor-type sets corresponding to the tree $\mathcal{T}$. To each Cantor-type set $C$, we will assign a probability measure $\mu$ such that $\supp(\mu)\subset C$. 
	
	Denote by $\lambda_Q$ the Lebesgue probability measure on the cube $Q$. Denote the measure $\mu_k$ by formula:
	\begin{equation}
		\mu_k=\sum\limits_{i=k+1}^{M_0+\dots+M_{k}}b(Q_i)\lambda_{Q_i}.
	\end{equation}
	These measures  satisfy recurrence relations:
	\begin{equation}
		\mu_0=\lambda_{[0,1]^d},
	\end{equation}
	\begin{equation}\label{nonrandomrel}
		\mu_k=\mu_{k-1}-b(Q_k)\lambda_{Q_k}+\frac{b(Q_k)}{M_k}\underset{\text{son of }Q_k}{\sum\limits_{Q_j \text{ is a}}}\lambda_{Q_j}.
	\end{equation}
	\subsection{Random construction}
	The behavior of the Fourier transform of a Cantor measure can be quite chaotic (see~\cite{Strichartz} for some examples). In this subsection we will use some randomization to choose a representative for which we can estimate $\hat{\mu}$.

	\begin{Def}
		Let $M\in\mathbb{N}$ and let $0<r<\frac{1}{2}$. Let $\mu_{M,r}$ be the random variable taking values in the set of probability measures:
		\begin{equation}
			\mu_{M,r}=\frac{1}{M}\sum\limits_{j=1}^{M}S_j\lambda_{[0,r]^d}.
		\end{equation}
		Here $\{S_j\}_{j=1}^{M}$ is a sequence of independent shifts that are uniformly distributed on the cube $[0,1-r]^d$.
	\end{Def}
	
	Let $f_r$ be a piecewise linear function (see Figure~\ref{pic1}) 
	
	\begin{equation}
		f_r(t)=
		\begin{cases}
			0, & t\in (-\infty,0]\cup[1,\infty),\\
			\frac{t}{r(1-r)}, & t\in [0,r],\\
			\frac{1}{1-r}, & t\in [r,1-r],\\
			\frac{1-t}{r(1-r)}, & t\in [1-r,1]. 
		\end{cases}
	\end{equation}
	\begin{figure}[h]
		\center{\includegraphics[scale=0.5]{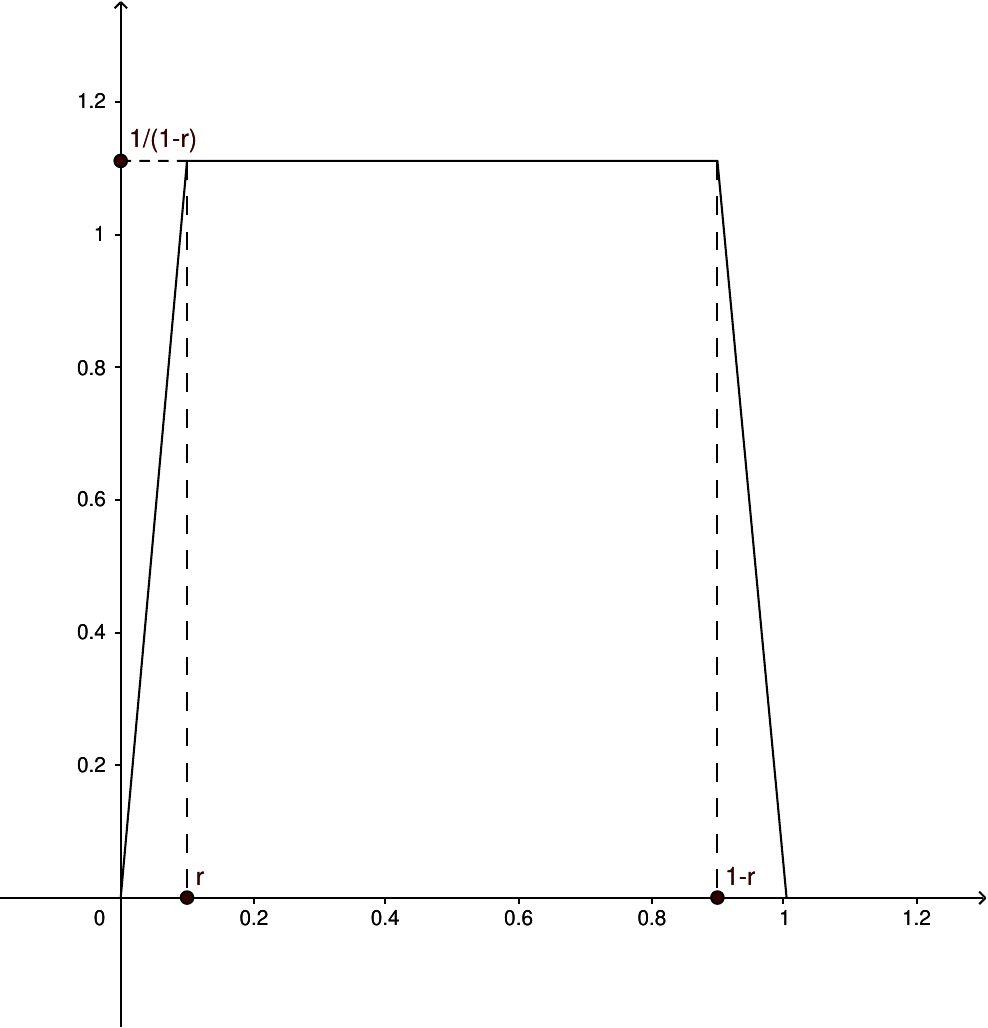}}
		\caption{Graph of $f_r$ for $r=0.1$}\label{pic1}
	\end{figure}
	
	Let $F_r(x)=\prod\limits_{i=1}^df_r(x_i)$. Then,
	\begin{equation}\label{mathex}
		\mathbb{E}\mu_{M,r}=\lambda_{[0,r]^d}*\lambda_{[0,1-r]^d}=F_r(x)dx
	\end{equation}
	 and  
	
	\begin{equation}\label{glav}
		\hat{\mu}_{M,r}(x)\overset{D}{=}\frac{1}{M}\sum_{j=1}^{M}e^{-2\pi i<\beta_{r,j},x>}\hat{\lambda}_0(xr).
	\end{equation}

	Here $\lambda_0$ is the Lebesgue measure on $[0,1]^d$ and $\{\beta_{r,j}\}_{j=1}^{M}$ is the sequence of independent vectors that are uniformly distributed on the cube $[0,1-r]^d$. The notation $\overset{D}{=}$ means equality of distributions.
	
	Let $\nu_{M,r}=\mu_{M,r}(\omega)$ be the value of $\mu_{M,r}$  at some point $\omega$ of the probability space to be chosen later.  By definitions, $\nu_{M,r}$ is a probability measure.
	
	We will use the measures $\nu_{M,r}$ to construct the sequences $\{Q_0,\dots,Q_{M_0+\dots+M_k}\}$ of cubes inductively. Set the cube $Q_0=[0,1]^d$. The measure $\nu_{M_0,r_0}$ corresponds to $M_0$  cubes. The sequence $\{Q_1,\dots,Q_{M_0}\}$ consists of those cubes.
	
	Assume we have constructed and the cubes $Q_0,Q_1,\dots,Q_{M_0+M_1+\dots+M_{k-1}}$. 
	Let $\nu_{M_k,\frac{r_k}{l(Q_k)},Q_k}$ be the image of $\nu_{M_k,\frac{r_k}{l(Q_k)}}$ under the homothety that maps $[0,1]^d$ into $Q_k$. The measure $\nu_{M_k,\frac{r_k}{l(Q_k)},Q_k}$ corresponds to $M_k$  cubes (see Figure~\ref{pic}). Add them to the end of the cube sequence. Thus, we have constructed the cubes $Q_0,Q_1,\dots,Q_{M_0+M_1+\dots+M_{k}}$.

	\begin{figure}[h]
		\center{\includegraphics[scale=0.5]{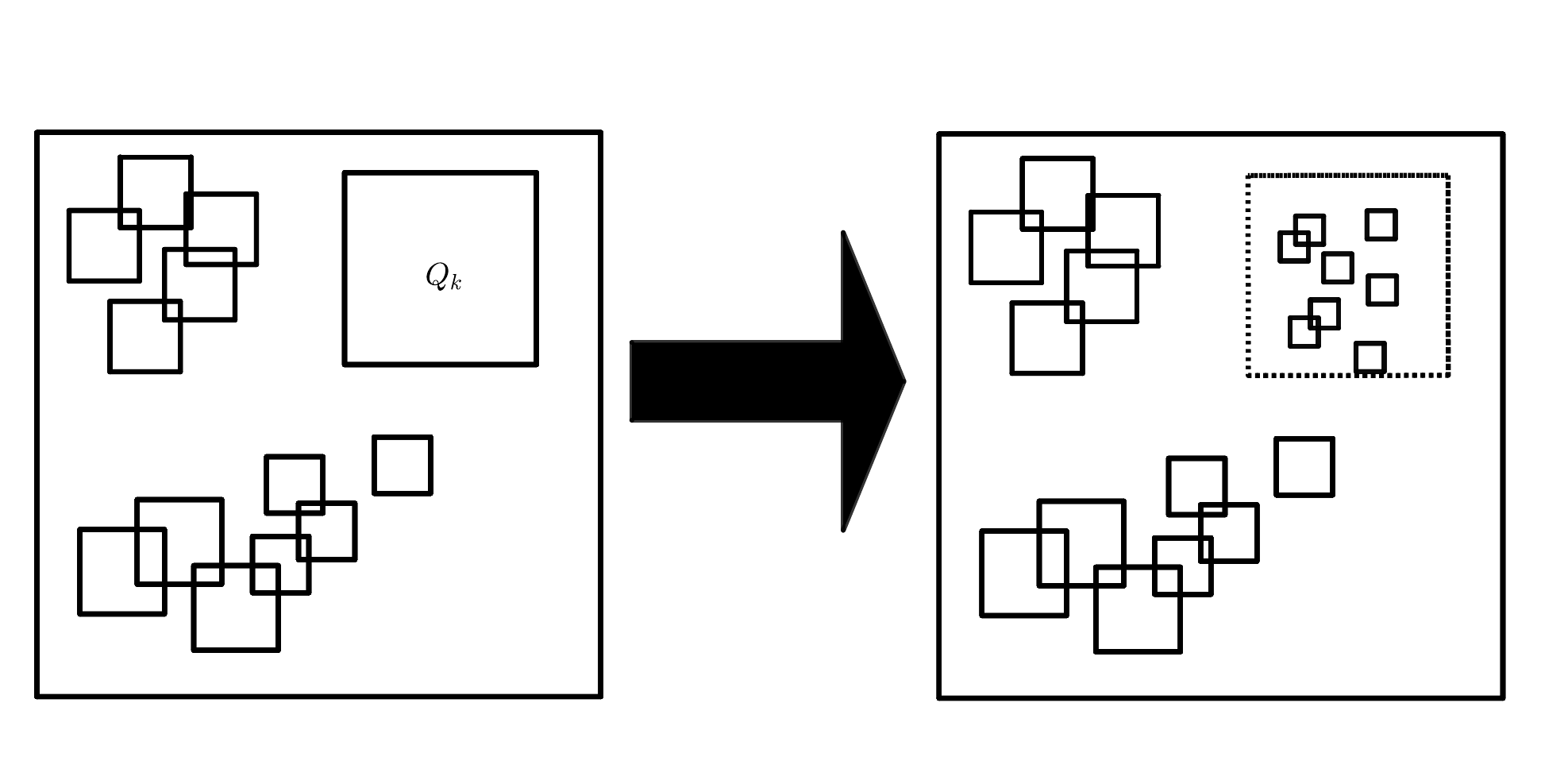}}
		\caption{Construction}\label{pic}
	\end{figure}
	
	\begin{Rem}
		The cube sequence $\{Q_0,Q_1,\cdots\}$ corresponds to a tree $\mathcal{T}$.
	\end{Rem}
	
	For this cube sequence the recurrence relations~\eqref{nonrandomrel} turn into
	
	\begin{equation}\label{formz}
		\mu_k=\mu_{k-1}-b(Q_k)\lambda_{Q_k}+b(Q_k)\nu_{M_k,\frac{r_k}{l(Q_k)},Q_k}.
	\end{equation}

	\section{Estimates}\label{ESC}

	\begin{Le}\label{Ep}
		For all $M\in \mathbb{N}$, $r<\frac{1}{2}$ and $p>1$ the inequality
		\begin{equation}\label{MP}
			\int\limits_{\mathbb{R}^d}|\mathbb{E}\hat{\mu}_{M,r}(x)|^pdx\lesssim 1
		\end{equation}
		is true.
	\end{Le}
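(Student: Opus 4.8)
The plan is to reduce the $d$-dimensional integral to a one-dimensional one and evaluate it explicitly. By~\eqref{mathex}, $\mathbb{E}\hat\mu_{M,r}$ does not depend on $M$ at all: it is the Fourier transform of the tensor product $F_r(x)=\prod_{i=1}^{d}f_r(x_i)$, so $\mathbb{E}\hat\mu_{M,r}(x)=\prod_{i=1}^{d}\widehat{f_r}(x_i)$. Hence, by Fubini,
\[
\int_{\mathbb{R}^d}\bigl|\mathbb{E}\hat\mu_{M,r}(x)\bigr|^p\,dx=\left(\int_{\mathbb{R}}|\widehat{f_r}(t)|^p\,dt\right)^{d},
\]
and it suffices to bound $\int_{\mathbb{R}}|\widehat{f_r}(t)|^p\,dt$ by a quantity depending on $p$ only (the exponent $d$ then affects only the implicit constant).

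The second step is to make $\widehat{f_r}$ explicit. The one-dimensional instance of~\eqref{mathex} says that $f_r$ is the density of $\lambda_{[0,r]}*\lambda_{[0,1-r]}$, so $\widehat{f_r}$ is a product of two sinc-type factors:
\[
|\widehat{f_r}(t)|=\left|\frac{\sin \pi rt}{\pi rt}\right|\cdot\left|\frac{\sin \pi(1-r)t}{\pi(1-r)t}\right|\leqslant\left|\frac{\sin \pi(1-r)t}{\pi(1-r)t}\right|\leqslant\min\left(1,\frac{1}{\pi(1-r)|t|}\right),
\]
using $|\sin\theta|\leqslant\min(|\theta|,1)$ and simply discarding the harmless factor coming from $r$. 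Since $r<\tfrac12$ we have $1-r>\tfrac12$, so the last bound is at most $\min\bigl(1,\tfrac{2}{\pi|t|}\bigr)$, which is uniform in $r$.

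Finally, integrate the pointwise bound, splitting at $|t|=1$:
\[
\int_{\mathbb{R}}|\widehat{f_r}(t)|^p\,dt\leqslant\int_{|t|\leqslant1}1\,dt+\left(\frac{2}{\pi}\right)^{p}\int_{|t|>1}|t|^{-p}\,dt=2+\left(\frac{2}{\pi}\right)^{p}\frac{2}{p-1},
\]
which is finite precisely because $p>1$ and does not depend on $M$ or $r$. Raising to the power $d$ gives~\eqref{MP}. I do not expect a genuine obstacle here: the only point that needs a little care is uniformity in $r$, and this is exactly where the hypothesis $r<\tfrac12$ enters, keeping $1-r$ bounded away from $0$; the factor produced by $r$ itself plays no role and can be thrown away.
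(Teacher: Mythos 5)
Your proof is correct and follows essentially the same route as the paper: both identify $\mathbb{E}\hat{\mu}_{M,r}$ as the Fourier transform of $\lambda_{[0,r]^d}*\lambda_{[0,1-r]^d}$, bound the factor coming from the small cube by $1$, and bound the remaining coordinatewise sinc factors by $\min(1,C/|x_j|)$ using $1-r>\tfrac12$, which integrates to a constant for $p>1$.
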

	
	The constant depends only on $d$ and $p$.

	\begin{proof}
		By \eqref{glav}, we have
		\begin{equation}
			\mathbb{E}\hat{\mu}_{M,r}(x)=\mathbb{E}\frac{1}{M}\sum_{j=1}^{M}e^{-2\pi i<\beta_{r,j},x>}\hat{\lambda}_0(rx)=
			\hat{\lambda}_0(rx)\mathbb{E}e^{-2\pi i<\beta_{r,1},x>}.
		\end{equation}
		Since $\lambda_0$ is a probability measure, $|\hat{\lambda}_0|\leqslant 1$. With the notation $h(t)=\frac{1-e^{it}}{t}$, we have 
		\begin{multline}
			|\mathbb{E}\hat{\mu}_{M,r}(x)|=
			\left|\hat{\lambda}_0(rx)\mathbb{E}e^{-2\pi i<\beta_{r,1},x>}\right|\leqslant
			|\mathbb{E}e^{-2\pi i<\beta_{r,1},x>}|=\\
			\left|\frac{1}{(1-r)^d}\int\limits_{[0,1-r]^d}e^{-2\pi i<\beta_{r,1},x>}d\beta_{r,1}\right|=
			\left|\prod\limits_{j=1}^dh\left(2\pi (1-r)x_j\right)\right|\lesssim\\
			\prod\limits_{j=1}^{d}\min\left(1,\frac{1}{|x_j|}\right).
		\end{multline}
		The latter estimate implies \eqref{MP}.
	\end{proof}

	\begin{Le}\label{Np}
		For all $M\in \mathbb{N}$, $r<\frac{1}{2}$ and $p>1$ the inequality
		\begin{equation}
			\int\limits_{\mathbb{R}^d}\mathbb{E}|\hat{\mu}_{M,r}(x)-\mathbb{E}\hat{\mu}_{M,r}(x)|^pdx\lesssim M^{-\frac{p}{2}}r^{-d}
		\end{equation}
		is true.
	\end{Le}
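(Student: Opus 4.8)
The plan is to exploit the representation \eqref{glav}, which writes $\hat\mu_{M,r}(x)$ as a normalized sum of $M$ independent bounded random variables times the deterministic factor $\hat\lambda_0(xr)$, and then to extract the decisive gain $M^{-p/2}$ from a Marcinkiewicz--Zygmund type inequality. First, fix $x$. By \eqref{glav} and linearity of the expectation,
\begin{equation}
	\hat\mu_{M,r}(x)-\mathbb{E}\hat\mu_{M,r}(x)\overset{D}{=}\hat\lambda_0(xr)\cdot\frac{1}{M}\sum_{j=1}^{M}Y_j(x),\qquad Y_j(x):=e^{-2\pi i\langle\beta_{r,j},x\rangle}-\mathbb{E}e^{-2\pi i\langle\beta_{r,1},x\rangle},
\end{equation}
where $Y_1(x),\dots,Y_M(x)$ are independent, have zero mean, and satisfy $|Y_j(x)|\leqslant 2$ for every $x$. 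Hence
\begin{equation}
	\mathbb{E}|\hat\mu_{M,r}(x)-\mathbb{E}\hat\mu_{M,r}(x)|^p=|\hat\lambda_0(xr)|^p\,\mathbb{E}\Bigl|\frac{1}{M}\sum_{j=1}^{M}Y_j(x)\Bigr|^p.
\end{equation}

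Next I would bound the random factor uniformly in $x$. If $p\geqslant 2$, the Marcinkiewicz--Zygmund inequality gives $\mathbb{E}\bigl|\sum_j Y_j(x)\bigr|^p\lesssim\mathbb{E}\bigl(\sum_j|Y_j(x)|^2\bigr)^{p/2}\leqslant(4M)^{p/2}$; if $1<p<2$, Jensen's inequality together with the orthogonality of the $Y_j(x)$ gives $\mathbb{E}\bigl|\tfrac1M\sum_j Y_j(x)\bigr|^p\leqslant\bigl(\mathbb{E}\bigl|\tfrac1M\sum_j Y_j(x)\bigr|^2\bigr)^{p/2}=\bigl(M^{-2}\sum_j\mathbb{E}|Y_j(x)|^2\bigr)^{p/2}\leqslant(4/M)^{p/2}$. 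In either case
\begin{equation}
	\mathbb{E}\Bigl|\frac{1}{M}\sum_{j=1}^{M}Y_j(x)\Bigr|^p\lesssim M^{-p/2},
\end{equation}
with a constant depending only on $p$, and crucially this bound is uniform in $x$.

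Finally, integrating in $x$ and changing variables $y=xr$,
\begin{equation}
	\int_{\mathbb{R}^d}\mathbb{E}|\hat\mu_{M,r}(x)-\mathbb{E}\hat\mu_{M,r}(x)|^p\,dx\lesssim M^{-p/2}\int_{\mathbb{R}^d}|\hat\lambda_0(xr)|^p\,dx=M^{-p/2}r^{-d}\int_{\mathbb{R}^d}|\hat\lambda_0(y)|^p\,dy,
\end{equation}
and it remains to observe that $\int_{\mathbb{R}^d}|\hat\lambda_0(y)|^p\,dy\lesssim 1$ with a constant depending on $d$ and $p$. Indeed $|\hat\lambda_0(y)|=\prod_{i=1}^d|h(2\pi y_i)|$ with $h$ as in the proof of Lemma~\ref{Ep}, and $|h(t)|\lesssim\min(1,1/|t|)$, so the integral factors into $d$ copies of $\int_{\mathbb{R}}\min(1,1/|t|)^p\,dt$, each finite precisely because $p>1$; this gives the asserted estimate. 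The only step that is not routine bookkeeping is the passage to $M^{-p/2}$, i.e. the Marcinkiewicz--Zygmund inequality in the range $p\geqslant 2$ — this is exactly where the randomization is used — and I would either cite it or record a short proof (e.g.\ via Khintchine's inequality after symmetrization) among the auxiliary lemmas of Section~\ref{AL}.
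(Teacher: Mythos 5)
Your proof is correct and follows essentially the same route as the paper: both use the representation \eqref{glav} to reduce to a normalized sum of i.i.d.\ mean-zero variables and extract the $M^{-p/2}$ gain from the Marcinkiewicz--Zygmund inequality, then integrate using $\int|\hat\lambda_0(xr)|^p\,dx\lesssim r^{-d}$. Your variant of factoring out $\hat\lambda_0(xr)$ before applying the probabilistic bound is a mild streamlining (it makes the summands uniformly bounded and avoids invoking Lemma~\ref{Ep} at the integration step), and your separate Jensen argument for $1<p<2$ actually covers the full stated range, whereas the paper's proof leans on Lemma~\ref{mz}, which is only formulated for $p>2$.
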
 
	
	\begin{proof}
		By~\eqref{glav} we have
		\begin{equation}
			\hat{\mu}_{M,r}(x)-\mathbb{E}\hat{\mu}_{M,r}(x)\overset{D}{=}\frac{1}{M}\sum_{j=1}^{M}\big(e^{-2\pi i<\beta_{r,j},x>}\hat{\lambda}_0(xr)-\mathbb{E}\hat{\mu}_{M,r}(x)\big).
		\end{equation}
		We use Lemma~\ref{mz} below: 
		\begin{multline}\label{glav1}
			\mathbb{E}|\hat{\mu}_{M,r}(x)-\mathbb{E}\hat{\mu}_{M,r}(x)|^p=
			\mathbb{E}\left|\frac{1}{M}\sum_{j=1}^{M}\left(e^{-2\pi i<\beta_{r,j},x>}\hat{\lambda}_0(xr)-\mathbb{E}\hat{\mu}_{M,r}(x)\right)\right|^p\leqslant\\
			C_pM^{-p}M^{\frac{p}{2}}\mathbb{E}\left|e^{-2\pi i<\beta_{r,1},x>}\hat{\lambda}_0(xr)-\mathbb{E}\hat{\mu}_{M,r}(x)\right|^p=
			C_pM^{-\frac{p}{2}}\mathbb{E}\left|e^{-2\pi i<\beta_{r,1},x>}\hat{\lambda}_0(xr)-\mathbb{E}\hat{\mu}_{M,r}(x)\right|^p\leqslant\\
			C_pM^{-\frac{p}{2}}2^{p-1}\left(\left|\hat{\lambda}_0(xr)\right|^p+\left|\mathbb{E}\hat{\mu}_{M,r}(x)\right|^p\right).
		\end{multline}
		We integrate \eqref{glav1} to complete the proof:
		\begin{equation}
			\int\limits_{\mathbb{R}^d}\mathbb{E}|\hat{\mu}_{M,r}(x)-\mathbb{E}\hat{\mu}_{M,r}(x)|^pdx\leqslant 
			\int\limits_{\mathbb{R}^d}
			C_pM^{-\frac{p}{2}}2^{p-1}\left(\left|\hat{\lambda_0}(xr)\right|^p+\left|\mathbb{E}\hat{\mu}_{M,r}(x)\right|^p\right)dx\lesssim M^{-\frac{p}{2}}r^{-d}.
		\end{equation}
		In final step, we have used Lemma~\ref{Ep}.
	\end{proof}

	Fix $p_1$ such that $p_1>p$.
	
	\begin{Cor}\label{PrE}
		For all $M\in\mathbb{N}$ and $0<r<\frac{1}{2}$ we can choose measures $\nu_{M,r}=\mu_{M,r}(\omega)$ such that
		
		\begin{equation}
			\int\limits_{\mathbb{R}^d}|\hat{\nu}_{M,r}(x)-\mathbb{E}\hat{\mu}_{M,r}(x)|^{p_1}dx\lesssim M^{-\frac{p_1}{2}}r^{-d},
		\end{equation}
		\begin{equation}
			\int\limits_{\mathbb{R}^d}|\hat{\nu}_{M,r}(x)-\mathbb{E}\hat{\mu}_{M,r}(x)|^{p}dx\lesssim M^{-\frac{p}{2}}r^{-d}.
		\end{equation}
	\end{Cor}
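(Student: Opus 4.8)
The plan is to apply Lemma~\ref{Np} once with the exponent $p$ and once more with the exponent $p_1$, and then to extract a single realization of $\mu_{M,r}$ on which both integrals are comparable to their means; both applications are legitimate since $p_1>p>1$. Because the integrand $(\omega,x)\mapsto\bigl|\hat{\mu}_{M,r}(x)-\mathbb{E}\hat{\mu}_{M,r}(x)\bigr|^{q}$ is nonnegative (and jointly measurable, being continuous in the random shifts and in $x$), Tonelli's theorem allows us to rewrite the two conclusions of Lemma~\ref{Np} as
\begin{equation}\label{twomoments}
	\mathbb{E}\int_{\mathbb{R}^d}\bigl|\hat{\mu}_{M,r}(x)-\mathbb{E}\hat{\mu}_{M,r}(x)\bigr|^{p}\,dx\lesssim M^{-\frac{p}{2}}r^{-d},\qquad
	\mathbb{E}\int_{\mathbb{R}^d}\bigl|\hat{\mu}_{M,r}(x)-\mathbb{E}\hat{\mu}_{M,r}(x)\bigr|^{p_1}\,dx\lesssim M^{-\frac{p_1}{2}}r^{-d},
\end{equation}
with constants depending only on $d$, $p$ and $p_1$, and in particular uniform in $M$ and $r$ (this uniformity being inherited from Lemmas~\ref{Ep} and~\ref{Np}).

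Next I would run a first-moment (union bound) argument. Put
\begin{equation}
	X=\int_{\mathbb{R}^d}\bigl|\hat{\mu}_{M,r}(x)-\mathbb{E}\hat{\mu}_{M,r}(x)\bigr|^{p}\,dx,\qquad
	Y=\int_{\mathbb{R}^d}\bigl|\hat{\mu}_{M,r}(x)-\mathbb{E}\hat{\mu}_{M,r}(x)\bigr|^{p_1}\,dx.
\end{equation}
These are nonnegative random variables, and~\eqref{twomoments} says $\mathbb{E}X\lesssim M^{-\frac{p}{2}}r^{-d}$ and $\mathbb{E}Y\lesssim M^{-\frac{p_1}{2}}r^{-d}$. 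By Markov's inequality $\mathbb{P}(X>4\,\mathbb{E}X)<\tfrac14$ and $\mathbb{P}(Y>4\,\mathbb{E}Y)<\tfrac14$, so the event $\{X\le 4\,\mathbb{E}X\}\cap\{Y\le 4\,\mathbb{E}Y\}$ has probability at least $\tfrac12$ and hence is nonempty. Choosing $\omega$ in it and setting $\nu_{M,r}=\mu_{M,r}(\omega)$, which is a probability measure by construction and satisfies $\hat{\nu}_{M,r}=\hat{\mu}_{M,r}(\omega)$, we obtain exactly the two asserted inequalities, with implicit constants equal to those of~\eqref{twomoments} multiplied by $4$.

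I do not anticipate a genuine obstacle: the corollary is a soft ``there exists a good sample'' consequence of the two moment bounds of Lemma~\ref{Np}, obtained by a union bound over two bad events of probability less than $\tfrac14$. The only points that require attention are the bookkeeping of constants, which must remain uniform in $M$ and $r$ (this holds precisely because Lemmas~\ref{Ep} and~\ref{Np} are stated with uniform constants), and the routine joint measurability of the integrand needed in order to apply Tonelli's theorem.
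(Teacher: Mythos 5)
Your argument is correct and is exactly the standard first-moment/union-bound reasoning the paper intends (the corollary is stated without proof, being an immediate consequence of Lemma~\ref{Np} applied at the exponents $p$ and $p_1$). The Tonelli swap, the two Markov bounds with threshold $4\,\mathbb{E}X$ and $4\,\mathbb{E}Y$, and the choice of a common $\omega$ in the intersection of the two good events give precisely the asserted inequalities with uniform constants.
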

	
	\begin{Le}\label{ooo}
		Let $\lambda_0$ be the Lebesgue measure on $[0,1]^d$. Then for $p>2$ the inequality
		\begin{equation}
			\|\hat{\lambda}_0-\mathbb{E}\hat{\mu}_{M,r}\|_{L_{p}}\lesssim r^{\frac{1}{p'}}
		\end{equation}
		is true.
	\end{Le}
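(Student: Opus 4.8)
The plan is to transfer the problem to the space side via the Hausdorff--Young inequality (available because $p>2$ puts the conjugate exponent $p'$ into $(1,2)$) and then to estimate a trapezoidal approximation of the indicator of the unit cube in $L_{p'}$. First I would note that, since $[0,1]^d$ has unit volume, $\lambda_0$ is the measure with density $\prod_{i=1}^d\mathbf{1}_{[0,1]}(x_i)$, and that by~\eqref{mathex} the averaged measure $\mathbb{E}\mu_{M,r}$ has density $F_r(x)=\prod_{i=1}^d f_r(x_i)$. Hence $\hat\lambda_0-\mathbb{E}\hat\mu_{M,r}$ is the Fourier transform of the bounded, compactly supported function $G_r:=\prod_{i=1}^d\mathbf{1}_{[0,1]}(x_i)-F_r\in L_{p'}(\mathbb{R}^d)$, and Hausdorff--Young gives $\|\hat\lambda_0-\mathbb{E}\hat\mu_{M,r}\|_{L_p}\lesssim\|G_r\|_{L_{p'}}$. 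There is no $M$-dependence, so it only remains to prove $\|G_r\|_{L_{p'}}\lesssim r^{1/p'}$ uniformly in $r<\tfrac12$.

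Next I would dispose of the one-dimensional factor $f_r-\mathbf{1}_{[0,1]}$ directly from its definition: it is supported in $[0,1]$, equals the constant $\tfrac{r}{1-r}$ on the middle interval $[r,1-r]$, and has absolute value at most $\tfrac{1}{1-r}\le 2$ on the two end intervals $[0,r]$ and $[1-r,1]$, of total length $2r$. Integrating the $p'$-th power yields $\|f_r-\mathbf{1}_{[0,1]}\|_{L_{p'}}^{p'}\lesssim r^{p'}+r\lesssim r$ (using $p'>1$ and $r<1$), i.e. $\|f_r-\mathbf{1}_{[0,1]}\|_{L_{p'}}\lesssim r^{1/p'}$. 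Along the way I would also record the trivial bound $\|f_r\|_{L_{p'}}^{p'}\le\|f_r\|_\infty^{p'-1}\|f_r\|_{L_1}\le 2^{p'-1}$, so that $\|f_r\|_{L_{p'}}\lesssim 1$ uniformly in $r<\tfrac12$.

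To pass from one variable to $d$ variables I would use the telescoping identity $\prod_{i}a_i-\prod_i b_i=\sum_{k=1}^d\big(\prod_{i<k}a_i\big)(a_k-b_k)\big(\prod_{i>k}b_i\big)$ with $a_i=f_r(x_i)$ and $b_i=\mathbf{1}_{[0,1]}(x_i)$. Each of the $d$ summands is a tensor product of one-variable functions, so its $L_{p'}(\mathbb{R}^d)$ norm equals the product of the corresponding one-variable $L_{p'}(\mathbb{R})$ norms; combining $\|f_r\|_{L_{p'}}\lesssim 1$, $\|\mathbf{1}_{[0,1]}\|_{L_{p'}}=1$, and $\|f_r-\mathbf{1}_{[0,1]}\|_{L_{p'}}\lesssim r^{1/p'}$ bounds each summand by $\lesssim r^{1/p'}$, and summing the $d$ of them gives $\|G_r\|_{L_{p'}}\lesssim r^{1/p'}$ with a constant depending only on $d$ and $p$.

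I do not anticipate a genuine obstacle; the only delicate point is getting the exponent $r^{1/p'}$ sharp rather than a weaker power. This comes from observing that the region where $F_r$ deviates by an $O(1)$ amount from $\prod_i\mathbf{1}_{[0,1]}(x_i)$ is a union of coordinate slabs of total Lebesgue measure $O(r)$, and the $L_{p'}$ norm of a uniformly bounded function supported on a set of measure $O(r)$ is $O(r^{1/p'})$. Routing the estimate through Hausdorff--Young is what keeps this clean; attempting to bound $\hat f_r-\widehat{\mathbf{1}}_{[0,1]}$ directly on the frequency side would be more cumbersome because of the $1/|\xi|$-type decay and the need to handle a product of $d$ factors.
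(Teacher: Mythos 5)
Your proof is correct and follows the same route as the paper: apply Hausdorff--Young to reduce to bounding $\|\lambda_0-\mathbb{E}\mu_{M,r}\|_{L_{p'}}$, then observe via~\eqref{mathex} that the density difference is uniformly bounded and concentrated on coordinate slabs of measure $O(r)$, giving $r^{1/p'}$. The paper leaves the $L_{p'}$ estimate as a one-line remark; your telescoping-product argument simply fills in that detail.
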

	\begin{proof}
		One may see from~\eqref{mathex} that $\|\lambda_0-\mathbb{E}\mu_{M,r}\|_{L_{p'}}\lesssim r^{\frac{1}{p'}}$. The Hausdorff--Young inequality finishes the proof.
	\end{proof}
	
	\begin{Cor}\label{cor1}
		We have the inequalities
		\begin{equation}
			\|\hat{\nu}_{M,r}(x)-\hat{\lambda}_0(x)\|^p_{L_{p}}\lesssim M^{-\frac{p}{2}}r^{-d} + r^{\frac{p}{p'}},
		\end{equation}
		\begin{equation}
			\|\hat{\nu}_{M,r}(x)-\hat{\lambda}_0(x)\|^{p_1}_{L_{p_1}}\lesssim M^{-\frac{p_1}{2}}r^{-d}+r^{\frac{p_1}{p_1'}}.
		\end{equation}
		Here $\nu_{M,r}$ is defined by the choice of $\omega$ in Corollary~\ref{PrE}.
	\end{Cor}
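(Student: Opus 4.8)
The plan is to combine Corollary~\ref{PrE} with Lemma~\ref{ooo} via the triangle inequality in $L_p$. Write $\hat{\nu}_{M,r}-\hat{\lambda}_0=(\hat{\nu}_{M,r}-\mathbb{E}\hat{\mu}_{M,r})+(\mathbb{E}\hat{\mu}_{M,r}-\hat{\lambda}_0)$, so that
\begin{equation}
\|\hat{\nu}_{M,r}-\hat{\lambda}_0\|_{L_p}\leqslant\|\hat{\nu}_{M,r}-\mathbb{E}\hat{\mu}_{M,r}\|_{L_p}+\|\mathbb{E}\hat{\mu}_{M,r}-\hat{\lambda}_0\|_{L_p}.
\end{equation}
By the first estimate of Corollary~\ref{PrE}, the first term on the right is $\lesssim M^{-1/2}r^{-d/p}$; by Lemma~\ref{ooo}, the second term is $\lesssim r^{1/p'}$. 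Raising to the $p$-th power and using the elementary bound $(a+b)^p\leqslant 2^{p-1}(a^p+b^p)$ for $a,b\geqslant 0$ yields
\begin{equation}
\|\hat{\nu}_{M,r}-\hat{\lambda}_0\|_{L_p}^p\lesssim M^{-\frac{p}{2}}r^{-d}+r^{\frac{p}{p'}},
\end{equation}
which is the first claimed inequality.

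For the second inequality one repeats the argument verbatim with $p$ replaced by $p_1$: the second estimate of Corollary~\ref{PrE} controls $\|\hat{\nu}_{M,r}-\mathbb{E}\hat{\mu}_{M,r}\|_{L_{p_1}}$ by $M^{-1/2}r^{-d/p_1}$, while Lemma~\ref{ooo} (which is stated for all exponents $>2$, and $p_1>p>2$) gives $\|\mathbb{E}\hat{\mu}_{M,r}-\hat{\lambda}_0\|_{L_{p_1}}\lesssim r^{1/p_1'}$; the same convexity bound finishes the proof. Throughout, the implicit constants depend only on $d$, $p$, and $p_1$, consistently with the conventions fixed earlier.

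There is no real obstacle here: the statement is a mechanical consequence of the two preceding results, the only point to watch being that Lemma~\ref{ooo} is applicable at the exponent $p_1$ as well, which holds since $p_1>2$. I would simply present the two-term triangle-inequality split and cite Corollary~\ref{PrE} and Lemma~\ref{ooo} for the two pieces.
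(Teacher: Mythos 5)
Your proof is correct and is exactly the intended argument: the paper gives no explicit proof of this corollary precisely because it is the mechanical triangle-inequality combination of Corollary~\ref{PrE} and Lemma~\ref{ooo} that you wrote out, including the observation that Lemma~\ref{ooo} applies at the exponent $p_1$ since $p_1>p>2$.
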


	\begin{Le}\label{Ras}
		Let $M_0,M_1,M_2,\dots,M_{k-1}$ be fixed and let $M_k$ tends to infinity. Then, the following inequalities
		\begin{equation}
			\varlimsup\limits_{M_k\rightarrow\infty}\|\hat{\mu}_k-\hat{\mu}_{k-1}\|^p_{L_{p}}\lesssim b(Q_k)^{\frac{p}{2}}(n(Q_k)+1)^d,
		\end{equation}
		\begin{equation}
			\lim\limits_{M_k\rightarrow\infty}\|\hat{\mu}_k-\hat{\mu}_{k-1}\|_{L_{p_1}}=0
		\end{equation}
		are true.
	\end{Le}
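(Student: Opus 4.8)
The plan is to deduce both estimates from Corollary~\ref{cor1} applied with $M=M_k$ and a suitable radius, once the scaling has been unwound. Throughout, $\mu_k$ is the measure produced by the recurrence~\eqref{formz} in which, at step $k$, the random measure $\nu_{M_k,r_k/l(Q_k)}$ is the realization $\mu_{M_k,r_k/l(Q_k)}(\omega_k)$ supplied by Corollary~\ref{PrE} (hence by Corollary~\ref{cor1}); this choice is legitimate for large $M_k$, since $r_k/l(Q_k)<1/2$ by~\eqref{soot} as $M_k\to\infty$.

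First I would reduce everything to the unit cube. From~\eqref{formz} one gets $\mu_k-\mu_{k-1}=b(Q_k)\bigl(\nu_{M_k,r_k/l(Q_k),Q_k}-\lambda_{Q_k}\bigr)$. Let $x\mapsto l(Q_k)x+a_k$ be the homothety carrying $[0,1]^d$ onto $Q_k$. Since $\nu_{M_k,r_k/l(Q_k),Q_k}$ and $\lambda_{Q_k}$ are the images under this map of $\nu_{M_k,r_k/l(Q_k)}$ and $\lambda_0$, taking Fourier transforms produces the unimodular factor $e^{-2\pi i\langle a_k,\xi\rangle}$ times $\hat\nu_{M_k,r}(l(Q_k)\xi)-\hat\lambda_0(l(Q_k)\xi)$, where $r:=r_k/l(Q_k)$. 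The substitution $\eta=l(Q_k)\xi$ then gives, for every exponent $q$,
\[
\|\hat\mu_k-\hat\mu_{k-1}\|_{L_q}^q=b(Q_k)^q\,l(Q_k)^{-d}\,\bigl\|\hat\nu_{M_k,r}-\hat\lambda_0\bigr\|_{L_q}^q .
\]
Next I would identify which quantities move with $M_k$: the numbers $b(Q_k)$, $n(Q_k)$, $l(Q_k)$ are determined by the ancestors of $Q_k$ and by the cube created before step $k$, hence depend only on $M_0,\dots,M_{k-1}$, while only $r_k$ (thus $r$) varies, with $r\to0$. Applying Corollary~\ref{cor1} with $M=M_k$ and this $r$, the key observation is that~\eqref{soot} is calibrated so that
\[
M_k^{-p/2}r^{-d}=M_k^{-p/2}\Bigl(\frac{M_k^{-p/(2d)}b(Q_k)^{p/(2d)}}{n(Q_k)+1}\Bigr)^{-d}l(Q_k)^d=b(Q_k)^{-p/2}(n(Q_k)+1)^d\,l(Q_k)^d ,
\]
which is independent of $M_k$, whereas $r^{p/p'}\to0$. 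Feeding this into the displayed identity with $q=p$ yields
\[
\varlimsup_{M_k\to\infty}\|\hat\mu_k-\hat\mu_{k-1}\|_{L_p}^p\lesssim b(Q_k)^p\,l(Q_k)^{-d}\cdot b(Q_k)^{-p/2}(n(Q_k)+1)^d\,l(Q_k)^d=b(Q_k)^{p/2}(n(Q_k)+1)^d ,
\]
which is the first assertion.

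For the second assertion I would run the $p_1$-version of Corollary~\ref{cor1}. Because the exponent $p/(2d)$ in~\eqref{soot} is keyed to $p$ and not to $p_1$, the same computation gives $M_k^{-p_1/2}r^{-d}=M_k^{(p-p_1)/2}\,b(Q_k)^{-p/2}(n(Q_k)+1)^d\,l(Q_k)^d$, which tends to $0$ since $p_1>p$; and $r^{p_1/p_1'}\to0$ as well. Hence $\|\hat\nu_{M_k,r}-\hat\lambda_0\|_{L_{p_1}}\to0$, and since $b(Q_k)\,l(Q_k)^{-d/p_1}$ is a fixed constant, the displayed identity with $q=p_1$ gives $\|\hat\mu_k-\hat\mu_{k-1}\|_{L_{p_1}}\to0$.

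The computations are elementary; the only delicate part is organizational — keeping straight which quantities are frozen by $M_0,\dots,M_{k-1}$ and which move with $M_k$, and correctly carrying the dilation factor $l(Q_k)^{-d}$ through the rescaling from $[0,1]^d$ to $Q_k$. The single conceptual point I expect to be the crux is recognizing that the exponent in~\eqref{soot} is exactly what makes $M_k^{-p/2}r^{-d}$ stabilize as $M_k\to\infty$ (giving a finite $\varlimsup$ in $L_p$) while the larger exponent $p_1/2$ still forces $M_k^{-p_1/2}r^{-d}\to0$ (giving convergence to $0$ in $L_{p_1}$).
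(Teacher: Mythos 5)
Your proposal is correct and follows essentially the same route as the paper: rewrite $\hat\mu_k-\hat\mu_{k-1}$ via~\eqref{formz}, rescale to the unit cube to pick up the factor $b(Q_k)^q l(Q_k)^{-d}$, apply Corollary~\ref{cor1}, and observe that~\eqref{soot} makes $M_k^{-p/2}r^{-d}$ stabilize at $b(Q_k)^{-p/2}(n(Q_k)+1)^d l(Q_k)^d$ while the $L_{p_1}$ terms vanish as $M_k\to\infty$. Your explicit remark that $r_k/l(Q_k)<1/2$ for large $M_k$ is a small detail the paper leaves implicit.
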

	\begin{proof}
		We can write estimates
	\begin{multline}
		\|\hat{\mu}_k-\hat{\mu}_{k-1}\|^p_{L_{p}}\overset{\text{\eqref{formz}}}{=}
		\|b(Q_k)(\hat{\nu}_{M_k,\frac{r_k}{l(Q_k)},Q_k}-\hat{\lambda}_{Q_k})\|^p_{L_{p}}=
		b(Q_k)^p\|\hat{\nu}_{M_k,\frac{r_k}{l(Q_k)},Q_k}-\hat{\lambda}_{Q_k}\|^p_{L_{p}}=\\
		b(Q_k)^pl(Q_k)^{-d}\|\hat{\nu}_{M_k,\frac{r_k}{l(Q_k)}}-\hat{\lambda}_0\|^p_{L_{p}}\overset{\text{Cor~\ref{cor1}}}{\lesssim}
		b(Q_k)^pl(Q_k)^{-d}\left(M_k^{-\frac{p}{2}}\left(\frac{r_k}{l(Q_k)}\right)^{-d} + \left(\frac{r_k}{l(Q_k)}\right)^{\frac{p}{p'}}\right)\overset{\text{\eqref{soot}}}{=}\\
		b(Q_k)^{\frac{p}{2}}(n(Q_k)+1)^d+b(Q_k)^{p}l(Q_k)^{-d-\frac{p}{p'}}r_k^{\frac{p}{p'}}\overset{M_k\rightarrow\infty}{\rightarrow} b(Q_k)^{\frac{p}{2}}(n(Q_k)+1)^d
	\end{multline}
	and in the same way we have
	\begin{equation}
		\|\hat{\mu}_k-\hat{\mu}_{k-1}\|^{p_1}_{L_{p_1}}\lesssim
		b(Q_k)^{p_1-\frac{p}{2}}M_k^{-\frac{p_1-p}{2}}(n(Q_k)+1)^d+
		b(Q_k)^{p_1}l(Q_k)^{-d}\left(\frac{r_k}{l(Q_k)}\right)^{\frac{p_1}{p_1'}}
		\overset{M_k\rightarrow\infty}{\rightarrow}0.
	\end{equation}
	Here  $b(Q_k)$, $n(Q_k)$ and $l(Q_k)$ are fixed and $r_k\rightarrow0$ while $M_k\rightarrow\infty$ (see equation~\eqref{soot}).
	\end{proof}

	\begin{Cor}
		There exists a constant $C>0$ such that inequality
		\begin{equation}
			\|\hat{\mu}_{k}\|^p_{L_{p}}\leqslant \|\hat{\mu}_{k-1}\|^p_{L_{p}}+Cb(Q_k)^{\frac{p}{2}}(n(Q_k)+1)^d
		\end{equation}
		is true, provided $M_k$ is  sufficiently large.
	\end{Cor}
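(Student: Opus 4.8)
The plan is to derive the Corollary from Lemma~\ref{Ras} by a Brezis--Lieb type argument. The underlying mechanism is that, as $M_k\to\infty$, the increment $\hat\mu_k-\hat\mu_{k-1}$ becomes asymptotically ``disjoint'' from $\hat\mu_{k-1}$ --- it tends to $0$ in $L_{p_1}$ --- so that no cross terms survive and the $p$-th powers of the $L_p$ norms become additive to leading order.

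First I would note that every $\mu_k$ is a finite linear combination of normalized Lebesgue measures on cubes, so $\hat\mu_k$ is a finite combination of products of one-dimensional sinc-type factors and therefore lies in $L_q(\mathbb{R}^d)$ for every $q>1$; in particular $\hat\mu_{k-1},\hat\mu_k\in L_p\cap L_{p_1}$. By the second part of Lemma~\ref{Ras}, $\hat\mu_k-\hat\mu_{k-1}\to 0$ in $L_{p_1}$ as $M_k\to\infty$; by the first part, $\varlimsup_{M_k\to\infty}\|\hat\mu_k-\hat\mu_{k-1}\|_{L_p}^p\leqslant C_0\, b(Q_k)^{p/2}(n(Q_k)+1)^d$, where $C_0$ is the constant hidden in the $\lesssim$ of Lemma~\ref{Ras} (uniform in $k$). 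Since $\hat\mu_{k-1}$ does not depend on $M_k$, it follows that $\|\hat\mu_k\|_{L_p}$ stays bounded as $M_k\to\infty$.

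Next I would invoke the Brezis--Lieb lemma. Given any sequence $M_k\to\infty$, the $L_{p_1}$-convergence $\hat\mu_k-\hat\mu_{k-1}\to 0$ lets us pass to a subsequence along which $\hat\mu_k\to\hat\mu_{k-1}$ almost everywhere; as $\{\hat\mu_k\}$ is bounded in $L_p$, the Brezis--Lieb lemma gives
\[
	\|\hat\mu_k\|_{L_p}^p-\|\hat\mu_k-\hat\mu_{k-1}\|_{L_p}^p\longrightarrow\|\hat\mu_{k-1}\|_{L_p}^p
\]
along that subsequence. Because every subsequence of $M_k\to\infty$ admits a further subsequence along which this holds, the subsequence principle yields $\|\hat\mu_k\|_{L_p}^p=\|\hat\mu_{k-1}\|_{L_p}^p+\|\hat\mu_k-\hat\mu_{k-1}\|_{L_p}^p+o(1)$ as $M_k\to\infty$. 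Taking $\varlimsup$ and using the $L_p$ bound above gives $\varlimsup_{M_k\to\infty}\|\hat\mu_k\|_{L_p}^p\leqslant\|\hat\mu_{k-1}\|_{L_p}^p+C_0\, b(Q_k)^{p/2}(n(Q_k)+1)^d$; since $b(Q_k)>0$, for any fixed $C>C_0$ the asserted inequality then holds as soon as $M_k$ is large enough.

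The only step requiring care is the passage from $L_{p_1}$-smallness of the increment to honest additivity of the $L_p$ norms: a bare triangle inequality would only produce an error of order $\|\hat\mu_{k-1}\|_{L_p}^{p-1}\|\hat\mu_k-\hat\mu_{k-1}\|_{L_p}$, which is far too large to be summed over $k$, so the gain is precisely in exploiting the almost everywhere decay of the increment along a subsequence via Brezis--Lieb. Everything else is bookkeeping.
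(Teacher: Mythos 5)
Your proof is correct, and at the top level it follows the same plan as the paper: combine the two estimates of Lemma~\ref{Ras} with a statement saying that an increment which is small in $L_{p_1}$ and bounded in $L_p$ contributes additively, at the level of $p$-th powers, to the $L_p$ norm. Where you diverge is in how that additivity statement is obtained. The paper proves a self-contained elementary lemma (Lemma~\ref{p+p}): it splits $g_j=\hat\mu_k-\hat\mu_{k-1}$ at a level $\delta$, kills the large part via Chebyshev from the $L_{p_1}$-decay ($\|{g_j}_{\geqslant\delta}\|_{L_p}^p\leqslant\delta^{p-p_1}\|g_j\|_{L_{p_1}}^{p_1}\to 0$), truncates $f=\hat\mu_{k-1}$ at a level $\varepsilon$, and observes that the surviving pieces $f_{\geqslant\varepsilon}$ and ${g_j}_{<\delta}\chi_{\{f<\varepsilon\}}$ have disjoint supports, so their $L_p^p$ norms literally add; the remaining error terms vanish as $\delta,\varepsilon\to0$. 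You instead invoke the Brezis--Lieb lemma, for which you correctly manufacture the hypotheses: a.e.\ convergence along a subsequence from the $L_{p_1}$-convergence, boundedness in $L_p$ from the triangle inequality and the first estimate of Lemma~\ref{Ras}, and then the subsequence principle to return to the full limit. Both routes are sound, and the uniformity in $k$ of the implicit constant (it comes only from $d$, $p$, $p_1$ through Corollary~\ref{cor1}) holds in either version. What the paper's argument buys is self-containedness and the observation that only the one-sided inequality $\varlimsup\|f+g_j\|_{L_p}^p\leqslant\|f\|_{L_p}^p+A^p$ is needed; what Brezis--Lieb buys is the sharper two-sided asymptotic $\|\hat\mu_k\|_{L_p}^p=\|\hat\mu_{k-1}\|_{L_p}^p+\|\hat\mu_k-\hat\mu_{k-1}\|_{L_p}^p+o(1)$, at the cost of citing an external result and the (harmless) subsequence bookkeeping.
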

	This corollary is a combination of the previous lemma and Lemma~\ref{p+p} below.

	Let $\{M_0,M_2,\dots\}$ be rapidly growing infinite sequence. Let $\mu_{\infty}$ be a weak* limit of $\mu_{k}$. The rapid grouth of  $\mathcal{M}_\infty$ provides us with the following inequality
	\begin{multline}
		\|\hat{\mu}_{\mathcal{M}_k}\|^p_{L_p}\leqslant \|\hat{\lambda}_0\|^p+C\sum\limits_{j=0}^{k}b(Q_j)^{\frac{p}{2}}n(Q_j)^d\lesssim
		\sum\limits_{j=0}^{\infty}b(Q_j)^{\frac{p}{2}}(n(Q_j)+1)^d=
		\sum\limits_{n=0}^{\infty}\sum\limits_{n(Q_j)=n}b(Q_j)^{\frac{p}{2}}(n+1)^d\leqslant\\
		\sum\limits_{n=0}^{\infty}\sum\limits_{n(Q_j)=n}b(Q_j)2^{-n(\frac{p}{2}-1)}(n+1)^d\ 
		\overset{\eqref{probsum}}{=}\ 
		\sum\limits_{n=0}^{\infty} 2^{-n(\frac{p}{2}-1)}(n+1)^d<\infty.
	\end{multline}
	Thus $\hat{\mu}_{\infty}\in L_p(\mathbb{R}^d)$. This formula and Proposition~\ref{H=0} completes the proof of Theorem~\ref{res}.

	\section{Auxiliary lemmas}\label{AL}
	
	\begin{Le}\label{mz}
		Let $\{X_j\}_{j=1}^M$ be i.i.d. random variables with zero mean. Then, for $p>2$ the inequality
		\begin{equation}
			\mathbb{E}\left|\sum_{j=1}^{M}X_j\right|^p\leqslant C_pM^{\frac{p}{2}}\mathbb{E}|X_1|^p
		\end{equation}
		is true. Here $C_p$ is a constant that does not depend on the $X_j$.
	\end{Le}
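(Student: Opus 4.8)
I would prove this by the classical symmetrization argument combined with Khintchine's inequality. We may assume $\mathbb{E}|X_1|^p<\infty$, since otherwise the right-hand side is infinite and there is nothing to prove. Throughout, the $X_j$ may be complex-valued (as they are in the applications of this lemma).

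The first step is symmetrization. I would introduce an independent copy $\{X_j'\}_{j=1}^M$ of $\{X_j\}_{j=1}^M$. Because $\mathbb{E}X_j'=0$, conditioning on $\sigma(X_1,\dots,X_M)$ gives $X_j=\mathbb{E}\big(X_j-X_j'\,\big|\,X_1,\dots,X_M\big)$, and conditional Jensen applied to $|\cdot|^p$ yields $\mathbb{E}\big|\sum_j X_j\big|^p\le \mathbb{E}\big|\sum_j (X_j-X_j')\big|^p$. Setting $Y_j:=X_j-X_j'$, the $Y_j$ are i.i.d. and symmetric (since $(X_j,X_j')\overset{D}{=}(X_j',X_j)$), and $\mathbb{E}|Y_1|^p\le 2^{p}\,\mathbb{E}|X_1|^p$. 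So it suffices to bound $\mathbb{E}\big|\sum_j Y_j\big|^p$.

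The second step uses Rademacher signs $\{\varepsilon_j\}_{j=1}^M$, i.i.d. and independent of $\{Y_j\}$. By symmetry of the $Y_j$, $(\varepsilon_j Y_j)_j\overset{D}{=}(Y_j)_j$, so $\mathbb{E}\big|\sum_j Y_j\big|^p=\mathbb{E}_Y\,\mathbb{E}_\varepsilon\big|\sum_j \varepsilon_j Y_j\big|^p$. For a fixed realization of $Y$, Khintchine's inequality (applied to real and imaginary parts separately and recombined) bounds the inner expectation by $B_p\big(\sum_j |Y_j|^2\big)^{p/2}$ with $B_p=B_p(p)$. Finally, since $p/2>1$, Minkowski's inequality in $L_{p/2}$ gives $\mathbb{E}\big(\sum_j |Y_j|^2\big)^{p/2}=\big\|\sum_j|Y_j|^2\big\|_{L_{p/2}}^{p/2}\le\big(\sum_j\big\||Y_j|^2\big\|_{L_{p/2}}\big)^{p/2}=\big(M\,(\mathbb{E}|Y_1|^p)^{2/p}\big)^{p/2}=M^{p/2}\mathbb{E}|Y_1|^p$. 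Chaining the three estimates, $\mathbb{E}\big|\sum_j X_j\big|^p\le 2^{p}B_p\,M^{p/2}\,\mathbb{E}|X_1|^p$, so $C_p=2^{p}B_p$ works.

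I do not expect a genuine obstacle here: the only nonelementary ingredient is Khintchine's inequality for $p>2$, which can either be cited from a standard reference or inserted with its usual short proof (comparing the $2k$-th moment of a Rademacher sum with powers of its variance, or via hypercontractivity of the noise operator on the discrete cube). Everything else — the symmetrization via an independent copy, conditional Jensen, the comparison $\mathbb{E}|Y_1|^p\le 2^p\mathbb{E}|X_1|^p$, and the triangle inequality in $L_{p/2}$ — is routine, and none of these constants depend on the distribution of the $X_j$.
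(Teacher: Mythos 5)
Your proof is correct, and it takes a somewhat different route from the paper. The paper simply cites the Marcinkiewicz--Zygmund inequality $\mathbb{E}|\sum_j X_j|^p\leqslant C_p\,\mathbb{E}(\sum_j|X_j|^2)^{p/2}$ as a black box (from Chow--Teicher) and then finishes with the pointwise H\"older/power-mean bound $(\sum_j|X_j|^2)^{p/2}\leqslant M^{p/2-1}\sum_j|X_j|^p$. You instead re-derive the Marcinkiewicz--Zygmund step from scratch via the standard symmetrization (independent copy plus conditional Jensen) followed by Khintchine, and you replace the H\"older step by the triangle inequality in $L_{p/2}$, which exploits the identical distribution of the $Y_j$ to land directly on $M^{p/2}\mathbb{E}|Y_1|^p$. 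Both finishes are equally valid here; your version is self-contained modulo Khintchine and costs you the extra factor $2^p$ from symmetrization, while the paper's is shorter at the price of an external citation. One genuinely useful point in your write-up: you address complex-valued $X_j$ explicitly (by splitting into real and imaginary parts before applying Khintchine), which matters because in the application (Lemma~\ref{Np}) the summands $e^{-2\pi i\langle\beta_{r,j},x\rangle}\hat{\lambda}_0(xr)-\mathbb{E}\hat{\mu}_{M,r}(x)$ are indeed complex; the paper leaves this reduction implicit.
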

	\begin{proof}
		First, we use the Marcinkiewicz--Zygmund inequality (see~\cite[Chapter~10.3, Theorem 2]{ChowTeicher})
		\begin{equation}
			\mathbb{E}\left|\sum_{j=1}^{M}X_j\right|^p\leqslant C_p\mathbb{E}\left(\sum_{j=1}^{M}|X_j|^2\right)^\frac{p}{2}.
		\end{equation}
		Second, we use H\"older's inequality to obtain:
		\begin{equation}
			\mathbb{E}\left(\sum_{j=1}^{M}|X_j|^2\right)^\frac{p}{2}\leqslant\mathbb{E} M^{\frac{p}{2}-1}\sum\limits_{j=1}^{M}|X_j|^p=M^{\frac{p}{2}}\mathbb{E}|X_1|^p.
		\end{equation}
	\end{proof}

	\begin{Le}\label{p+p}
		Let $f\in L_{p}$ be a function and let $g_j\in L_{p}$ be a sequence of functions. Assume that $\varlimsup\limits_{j\rightarrow\infty}\|g_j\|_{L_{p}}\leqslant A$ and for some $p_1>p$ we have $\lim\limits_{j\rightarrow\infty}\|g_j\|_{L_{p_1}}=0$. Then
		\begin{equation}
			\varlimsup\limits_{j\rightarrow\infty} \|f+g_j\|^p_{L_{p}}\leqslant \|f\|^p_{L_{p}}+A^p.
		\end{equation} 
	\end{Le}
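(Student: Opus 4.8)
The plan is to prove Lemma~\ref{p+p} by decomposing each $g_j$ into a ``large'' and a ``small'' part and exploiting interpolation between $L_p$ and $L_{p_1}$. Concretely, for a threshold $\lambda>0$ write $g_j = g_j\mathbf{1}_{\{|g_j|>\lambda\}} + g_j\mathbf{1}_{\{|g_j|\le\lambda\}}$; call these $g_j^{\mathrm{big}}$ and $g_j^{\mathrm{small}}$. The key observation is that the $L_{p_1}$ convergence forces the big part to vanish in $L_p$: indeed, by Chebyshev and H\"older,
\begin{equation}
\|g_j^{\mathrm{big}}\|_{L_p}^p = \int_{\{|g_j|>\lambda\}}|g_j|^p \le \lambda^{p-p_1}\int |g_j|^{p_1} = \lambda^{p-p_1}\|g_j\|_{L_{p_1}}^{p_1} \xrightarrow{j\to\infty} 0,
\end{equation}
for every fixed $\lambda>0$ (using $p-p_1<0$). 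Meanwhile the small part is uniformly bounded in $L_p$ with $\varlimsup_j \|g_j^{\mathrm{small}}\|_{L_p} \le A$, and its $L_\infty$ norm is at most $\lambda$.

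Next I would estimate $\|f+g_j\|_{L_p}^p$. By the triangle inequality in $L_p$ we may discard $g_j^{\mathrm{big}}$ at the cost of a term tending to $0$:
\begin{equation}
\|f+g_j\|_{L_p} \le \|f+g_j^{\mathrm{small}}\|_{L_p} + \|g_j^{\mathrm{big}}\|_{L_p},
\end{equation}
so it suffices to control $\varlimsup_j\|f+g_j^{\mathrm{small}}\|_{L_p}^p$. Here the idea is that $g_j^{\mathrm{small}}$, being bounded by $\lambda$, should become ``asymptotically disjoint in a soft sense'' from the bulk of $f$: split $f = f\mathbf{1}_{\{|f|>R\}} + f\mathbf{1}_{\{|f|\le R\}}$ for large $R$. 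On the set $\{|f|\le R\}$, the whole integrand $|f+g_j^{\mathrm{small}}|^p$ is bounded by $(R+\lambda)^p$ and supported where... — this is where one must be a little careful, since $f\mathbf{1}_{\{|f|\le R\}}$ need not have finite-measure support. The cleaner route: use the pointwise inequality valid for $a,b\ge 0$ and any $\eps>0$,
\begin{equation}
(a+b)^p \le (1+\eps)a^p + C_{\eps,p}\, b^p,
\end{equation}
applied with $a=|f|$, $b=|g_j^{\mathrm{small}}|$? That only gives $\|f\|_p^p(1+\eps) + C_\eps\|g_j^{\mathrm{small}}\|_p^p$, which yields $(1+\eps)\|f\|_p^p + C_\eps A^p$ — the wrong constant on $A$. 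So a more refined argument is needed.

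The right approach uses the near-orthogonality more seriously. I would instead argue: for fixed $\eps$, choose $R$ so large that $\|f\mathbf{1}_{\{|f|>R\}}\|_{L_p}<\eps$. On the region $E = \{|f|\le R\}$, bound $\int_E |f+g_j^{\mathrm{small}}|^p$; since $|g_j^{\mathrm{small}}|\le\lambda$ pointwise, we split further into $E\cap\{|g_j|\le\delta\}$ where the integrand is close to $|f|^p$ uniformly, and $E\cap\{\delta<|g_j|\le\lambda\}$, a set of measure $\le\delta^{-p_1}\|g_j\|_{L_{p_1}}^{p_1}\to 0$, on which the integrand is $\le(R+\lambda)^p$ — hence this piece $\to 0$. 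Putting the pieces together: on the dominant region the integrand differs from $|f|^p + (\text{contribution of }g_j)$ only by cross terms controllable by Young's inequality with small parameter, and a careful bookkeeping gives $\varlimsup_j\|f+g_j^{\mathrm{small}}\|_p^p \le \|f\|_p^p + A^p + O(\eps)$; letting $\eps\to0$ and $\lambda\to\infty$ appropriately (actually $\lambda$ can stay fixed, only the $L_{p_1}\to0$ hypothesis and $\eps\to0$ matter) finishes it. The main obstacle is exactly this last bookkeeping: showing that the cross terms $\int |f|^{p-1}|g_j^{\mathrm{small}}|$ and lower do not spoil the clean additive constant $\|f\|_p^p + A^p$, which forces one to genuinely use that the part of $g_j$ living where $|f|$ is large has small $L_p$ norm — and that is where the $\{|g_j|>\lambda\}$ truncation combined with $\|g_j\|_{L_{p_1}}\to 0$ does the work, since on $\{|f|\le R\}\cap\{|g_j|\le\lambda\}$ we can make the measure of the overlap with $\{|g_j|\ge\delta\}$ as small as we like.
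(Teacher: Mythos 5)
Your first step---discarding $g_j\chi_{\{|g_j|>\lambda\}}$ via Chebyshev and the $L_{p_1}$ hypothesis---is sound and is exactly what the paper does (its $g_{j,\geqslant\delta}$). But the heart of the lemma is obtaining the \emph{additive} constant $\|f\|_{L_p}^p+A^p$ rather than a multiple of $A^p$, and there your argument has a genuine gap, which you yourself flag as ``the main obstacle.'' Two concrete problems. First, on $E\cap\{|g_j|\le\delta\}$ the integrand being pointwise within $O(\delta)$ of $|f|^p$ does not make the integrals close: that set may have infinite measure, and in fact $\int_{\{|g_j|\le\delta\}}|g_j|^p$ can be as large as $A^p$ (take $f=0$ and $g_j=A\,n_j^{-d/p}\chi_{[0,n_j]^d}$ with $n_j\to\infty$), so the mass of $g_j$ on the ``small'' set is precisely what produces the $A^p$ and cannot be treated as an error term. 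Second, the cross terms $\int|f|^{p-1}|g_j^{\mathrm{small}}|$ are not controlled by the truncations you introduce: H\"older over the whole space gives only $\|f\|_{L_p}^{p-1}A$, which does not vanish, while the pointwise bound $|g_j^{\mathrm{small}}|\le\delta$ yields $\delta\int_{\{|f|\le R\}}|f|^{p-1}$, which can be infinite since $f\in L_p$ does not imply $f\in L_{p-1}$.

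The missing idea is a truncation of $f$ \emph{from below}. The paper writes $f=f_{\geqslant\eps}+f_{<\eps}$ and observes that $f_{\geqslant\eps}$ and $g_{j,<\delta}\chi_{\{|f|<\eps\}}$ have \emph{disjoint supports}, so their $p$-th powers add exactly: $\|f_{\geqslant\eps}+g_{j,<\delta}\chi_{\{|f|<\eps\}}\|_{L_p}^p=\|f_{\geqslant\eps}\|_{L_p}^p+\|g_{j,<\delta}\chi_{\{|f|<\eps\}}\|_{L_p}^p\le\|f\|_{L_p}^p+\|g_j\|_{L_p}^p$. There are no cross terms at all, and the three leftover pieces are each small: $\|g_{j,\geqslant\delta}\|_{L_p}\to0$ by your Chebyshev argument, $\|f_{<\eps}\|_{L_p}\to0$ as $\eps\to0$ by dominated convergence, and $\|g_{j,<\delta}\chi_{\{|f|\geqslant\eps\}}\|_{L_p}\le\delta\,|\{|f|\geqslant\eps\}|^{1/p}$, which is finite because $f\in L_p$ and tends to $0$ as $\delta\to0$. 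Your cross-term route could in principle be rescued by the same lower truncation (split the cross terms over $\{|f|\geqslant\eps\}$, which has finite measure and where $|g_{j,<\delta}|\le\delta$, and over $\{|f|<\eps\}$, where H\"older produces a factor $\|f_{<\eps}\|_{L_p}$), but as written the decisive step is absent: your upper truncation of $f$ at level $R$ is not the one that matters.
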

	
	\begin{proof}
		For a function $h$ and $\varepsilon>0$, let 
		\begin{equation}
			h_{\geqslant \varepsilon}=\chi_{\{|h|\geqslant \varepsilon\}}h,
		\end{equation}
		\begin{equation}
			h_{< \varepsilon}=\chi_{\{|h|< \varepsilon\}}h.
		\end{equation}
		
		Clearly, $h=h_{\geqslant \varepsilon}+h_{< \varepsilon}$. Pick some $\delta>0$. Then
		\begin{multline}\label{texv}
			\|f+g_j\|_{L_p}\leqslant \|f_{\geqslant\varepsilon}+{g_j}_{<\delta}\chi_{\{f< \varepsilon\}}\|_{L_p}+\|{g_j}_{\geqslant\delta}\|_{L_p}+\|f_{<\varepsilon}\|_{L_p}+\|{g_j}_{<\delta}\chi_{\{f\geqslant \varepsilon\}}\|_{L_p}=\\
			(\|f_{\geqslant\varepsilon}\|^p_{L_p}+\|{g_j}_{<\delta}\chi_{\{f< \varepsilon\}}\|^p_{L_p})^{\frac{1}{p}}+\|{g_j}_{\geqslant\delta}\|_{L_p}+\|f_{<\varepsilon}\|_{L_p}+\|{g_j}_{<\delta}\chi_{\{f\geqslant \varepsilon\}}\|_{L_p}\leqslant\\
			(\|f\|^p_{L_p}+\|g_j\|^p_{L_p})^{\frac{1}{p}}+\|{g_j}_{\geqslant\delta}\|_{L_p}+\|f_{<\varepsilon}\|_{L_p}+\delta\|\chi_{\{f\geqslant \varepsilon\}}\|_{L_p}.
		\end{multline}

		The equality $\lim\limits_{j\rightarrow\infty}\|g_j\|_{L_{p_1}}=0$ implies   $\lim\limits_{j\rightarrow\infty}\|{g_j}_{\geqslant\delta}\|_{L_p}=0$. Therefore, \eqref{texv} yields
		\begin{multline}
			\varlimsup\limits_{j\rightarrow\infty} \|f+g_j\|_{L_p}\leqslant (\|f\|^p_{L_p}+A^p)^{\frac{1}{p}}+\|f_{<\varepsilon}\|_{L_p}+\delta\|\chi_{\{f\geqslant \varepsilon\}}\|_{L_p}\overset{\delta\rightarrow0}{\rightarrow} (\|f\|^p_{L_p}+A^p)^{\frac{1}{p}}+\|f_{<\varepsilon}\|_{L_p}\overset{\varepsilon\rightarrow0}{\rightarrow}\\
			(\|f\|^p_{L_p}+A^p)^{\frac{1}{p}}.
		\end{multline}

	\end{proof}

	\
	
	\
	
	\
	
	{\small
		
		St. Petersburg State University Department of Leonhard Euler International Mathematical Institute;\\
		e-mail:dobronravov1999@mail.ru\\
		\bigskip
		
	}
	
\end{document}